\documentclass[12pt]{amsart}
\usepackage{amsfonts,latexsym,rawfonts,amsmath,amssymb,amsthm,mathrsfs}
\usepackage[plainpages=false]{hyperref}
\usepackage{graphicx}
\usepackage{comment}
\usepackage[pagewise]{lineno}

\numberwithin{equation}{section}

\RequirePackage{color}
 \textwidth = 6.2 in
 \textheight = 8.25 in
 \oddsidemargin = 0.25 in
 \evensidemargin = 0.25 in
\voffset=-20pt
\pagestyle{plain}
\theoremstyle{plain}

\newtheorem{theorem}{Theorem}[section]
\newtheorem{lem}[theorem]{Lemma}

\theoremstyle{Corollary}
\newtheorem{cor}[theorem]{Corollary}

\newtheorem{remark}{Remark}[section]
\newcommand{\beq}{\begin{equation}}
\newcommand{\eeq}{\end{equation}}
\newcommand{\beqs}{\begin{eqnarray*}}
\newcommand{\eeqs}{\end{eqnarray*}}
\newcommand{\beqn}{\begin{eqnarray}}
\newcommand{\eeqn}{\end{eqnarray}}
\newcommand{\beqa}{\begin{array}}
\newcommand{\eeqa}{\end{array}}

\def\R{\mathbb R}


\begin{document}
\title{A class of anisotropic expanding curvature flows}

\author{Weimin Sheng}
\address{Weimin Sheng: School of Mathematical Sciences, Zhejiang University, Hangzhou 310027, China.}
\email{weimins@zju.edu.cn}

\author{Caihong Yi}
\address{Caihong Yi:
School of Mathematical Sciences, Zhejiang University, Hangzhou 310027, China.}
\email{11735001@zju.edu.cn}

\thanks{The authors were supported by NSFC, grant no. 11571304.}
\keywords{expanding flow, anisotropic flow, asymptotic behaviour}

\subjclass[2010]{35K96, 53C44}

\begin{abstract}
In this paper, we consider an expanding flow of smooth, closed, uniformly convex hypersurfaces in Euclidean $R^{n+1}$ with speed $u^\alpha\sigma_k^\beta$ firstly, where $u$ is support function of the hypersurface, $\alpha, \beta \in R^1$, and $\beta>0$, $\sigma_k$ is the $k$-th symmetric polynomial of the principle curvature radii of the hypersurface, $k$ is an integer and $1\le k\le n$. For $\alpha\le1-k\beta$, $\beta>\frac{1}{k}$ we prove that the flow has a unique smooth and uniformly convex solution for all time, and converges smoothly after normalisation, to a sphere centered at the origin. Moreover, for $\alpha\le1-k\beta$, $\beta>\frac{1}{k}$, we prove that the flow with the speed $fu^\alpha\sigma_k^\beta$ exists for all time and converges smoothly after normalisation to a soliton which is a solution of
$fu^{\alpha-1}\sigma_k^{\beta}=c$ provided that $f$ is a smooth positive function on $S^n$ and satisfies that $(\nabla_i\nabla_jf^{\frac{1}{1+k\beta-\alpha}}+\delta_{ij}f^{\frac{1}{1+k\beta-\alpha}})$ is positive definite. When $\beta=1$, our argument provides a proof to the well-known $L_p$ Christoffel-Minkowski problem for the case $p\ge k+1$ where $p=2-\alpha$, which is identify with Ivaki's recent result.
Especially, we obtain the same result for $k=n$ without any constraint on smooth positive function $f$. Finally,  we also give a counterexample for the two anisotropic expanding flows when $\alpha>1-k\beta$.
\end{abstract}

\maketitle

\baselineskip16pt
\parskip3pt

\section{Introduction}

 Flows of convex hypersurfaces in $\R^{n+1}$ by a class of speed functions which are homogeneous and symmetric in principal curvatures have been studied by many authors. Firey \cite{Fir74} first  introduced the Gauss curvature flow as a model for the shape change of tumbling stones. In \cite{Hui84} Huisken considered the mean curvature flow. He showed that the flow has a unique smooth solution and the hypersurface converges to a round sphere if the initial hypersurface is closed and convex. Later, a range of flows with the speed of homogeneous of degree one in principal curvatures were established, see \cite{Chow85, Chow87, And99} etc. for example.  For the curvature flow at the speed of $\alpha$-power of the Gauss-Knonecker curvature, it was conjectured that the solution will converge to a round point along the flow for $\alpha>\frac{1}{n+2}$. Chow \cite{Chow85}, Andrews \cite{And94}, Choi and Daskalopoulos \cite{ChoiDask16} gave the partial answers respectively. In \cite{BCD16}, Brendle et al.  finally resolved the conjecture for all $\alpha>\frac{1}{n+2}$ in all dimensions recently.

For the problem on the existence of the prescribed polynomial of the principal curvature radii of the hypersurface, Urbas\cite{Urb91}, Chow and Tsai\cite{ChowH97}, Gerhardt \cite{Gerh14}, Xia \cite{X16} studied the convergence for the flow with the speed of $F(\lambda_1,...\lambda_n)$, where $F$ is symmetric polynomial of the principal curvature radii $\lambda_i$ of the hypersurface. Guan and Ma \cite{GuMa03}, Hu et al.  \cite{HMS04}, and Guan-Xia \cite{GuX18} etc. gave the proofs for a class of $L_p$ Christoffel-Minkowski problems. On the other hand, as a nature extension, anisotropic flows usually provide alternative proofs and smooth category approach of the existence of solutions to elliptic PDEs arising in convex body geometry, see \cite{Wang96, ChWang00, GuNi16, LSW16, Iva2018} etc..
In this paper, we consider two expanding flows of the convex hypersurfaces at the speeds of $u^\alpha \sigma_k^\beta(\lambda_1,...\lambda_n)$ and $fu^\alpha\sigma_k^\beta(\lambda_1,...\lambda_n)$ respectively, where $u$ is the support function, $f$ is a smooth positive function on $S^n$, $\alpha, \beta \in R^1$, $\beta>\frac{1}{k}$ and $\sigma_k(\lambda_1,...,\lambda_n)$ is  the $k$-th symmetric polynomial of the principal curvature radii of the hypersurface, $k$ is an integer and $1\le k\le n$.
Generally, for the flow with the speed of high powers of curvatures, it is required that the initial hypersurface is uniformly convex and satisfies a suitable pinching conditions, so as to preserve the uniformly convexity and converge to a sphere (\cite{Al-Sin10},\cite{AndMc12}). Here we prove the same result without any pinching conditions for the flow at the speed $u^\alpha\sigma_k^\beta$. Moreover, for $\alpha\le1-k\beta$, and $\beta>\frac{1}{k}$ we prove that the solution to the flow which is moving at the speed $fu^\alpha\sigma_k^\beta$ exists for all time and converges smoothly after normalisation to a soliton which is a solution of
$fu^{\alpha-1}\sigma_k^{\beta}=c$ if $f$ is a smooth positive function on $S^n$ and satisfies the condition that $(\nabla_i\nabla_jf^{\frac{1}{1+k\beta-\alpha}}+\delta_{ij}f^{\frac{1}{1+k\beta-\alpha}})$ is positive definite.
We have the same result for $k=n$ without any constraint on positive smooth function $f$, which recovers the result of Chou and Wang \cite{ChWang06}. When $\beta=1$, the flow has been studied by Ivaki recently \cite{Iva2018}. In this case the self-similar solution of the flow  $\partial_tu=fu^\alpha\sigma_k^{\beta}$ is the solution to $fu^{\alpha-1}\sigma_k=c$ which is just the $L_p$ Christoffel-Minkowski problem for $p=2-\alpha$.

Let $M_0$ be a smooth, closed and uniformly convex hypersurface in $R^{n+1}$, and $M_0$ encloses the origin. We study the following anisotropic expanding curvature flow
\begin{equation}\label{EF1}
\left\{
\begin{array}{ll}
\frac{\partial{X}}{\partial{t}}(x,t)&=f(\nu)< X,\nu>^{{\alpha}}{\sigma}_{k}^{\beta}(x,t)\nu,\\
X(x,0)&=X_0(x),
\end{array}
\right.
\end{equation}
where $\sigma_k$ is the $k$-th elementary symmetric function for principal curvature radii, i.e
$$\sigma_k(.,t)=\sum_{i_1<...<i_k}\lambda_{i_1}...\lambda_{i_k},$$
$\lambda_i$ is the principal curvature radii of hypersurface $M_t$, parametrized by $X(.,t):S^n\to R^{n+1}$, and $\nu$ is the unit outer normal at $X(.,t)$.

In this paper, we prove the following
\begin{theorem}\label{main1}
Let $M_0$ be a smooth, closed and uniformly convex hypersurface in $R^{n+1}$, $n\ge2$, enclosing the origin. If $f\equiv1$, $\alpha\in R$, $\beta>\frac{1}{k}$ and $\alpha\le1-k\beta$, $k$ is an integer and $1\le k\le n$.
Then the flow \eqref{EF1} has a unique smooth and uniformly convex solution $M_t$ for all time $t>0$. After a proper rescaling $X\to\phi^{-1}(t)X$, where
\begin{equation}\label{scalingfactor1}
\begin{array}{lcl}
\phi(t)=e^{\gamma t}, &{\rm{if}}  &{\alpha}=1-k\beta,\\
\phi(t)=(1+(1-k\beta-{\alpha})\gamma t)^{\frac{1}{1-k\beta-\alpha}},&{\rm{ if}} &{\alpha}\ne 1-k\beta.
\end{array}
\end{equation}
 and
 $$\gamma=\sigma_k^{\beta}(1,...,1)$$
the hypersurface $\widetilde{M_t}=\phi^{-1}M_t$ converges exponentially to a sphere centered at the origin in the $C^\infty$ topology.
\end{theorem}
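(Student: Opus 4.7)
The plan is to recast the flow as a fully nonlinear parabolic PDE on $S^n$. Parametrizing $M_t$ by the inverse Gauss map $X(\cdot,t)\colon S^n\to \R^{n+1}$, the support function $u(x,t) = \langle X(x,t), x\rangle$ evolves by
\[
\partial_t u = u^\alpha\,\sigma_k^\beta(\lambda[u]),
\]
where the principal radii $\lambda_i$ are the eigenvalues of the symmetric matrix $A[u] = (\nabla_i\nabla_j u + u\,\delta_{ij})$ on the standard $S^n$. Setting $\tilde u(x,\tau) = \phi^{-1}(t)\,u(x,t)$ with the $\phi(t)$ of the statement and reparametrizing time, the rescaled support function satisfies
\[
\partial_\tau \tilde u = \tilde u^\alpha\,\sigma_k^\beta(\lambda[\tilde u]) - \gamma\,\tilde u,
\]
which, by the choice $\gamma = \sigma_k^\beta(1,\ldots,1)$, admits $\tilde u\equiv 1$ (the unit sphere centered at the origin) as a stationary solution. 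The theorem thus reduces to long-time existence for this rescaled equation together with exponential smooth convergence $\tilde u\to 1$.

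\textbf{A priori estimates and long-time existence.} I would proceed in the standard order. For the $C^0$ bound, at a spatial maximum one has $A[\tilde u]\le \tilde u\,I$, so $\sigma_k(\lambda[\tilde u])\le \binom{n}{k}\tilde u^k$ and hence
\[
\partial_\tau \tilde u_{\max}\le \gamma\,\tilde u_{\max}\bigl(\tilde u_{\max}^{\alpha+k\beta-1}-1\bigr),
\]
which is nonpositive whenever $\tilde u_{\max}>1$ because $\alpha + k\beta - 1\le 0$; a symmetric argument at the minimum yields a uniform positive lower bound. A gradient estimate then follows from the maximum principle applied to $|\nabla\tilde u|^2$, exploiting the $C^0$ bound and the ellipticity of $\sigma_k^\beta$ on positive $A[\tilde u]$. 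The decisive step is a two-sided bound on the eigenvalues of $A[\tilde u]$, i.e.\ preservation of uniform convexity together with an upper curvature-radius bound: I would test an auxiliary function such as the largest eigenvalue of $A[\tilde u]$ (or a suitable symmetric trace) combined with powers of $\tilde u$, and use the ellipticity and concavity structure of $\sigma_k^{1/k}$ together with the assumption $\beta>1/k$ to dominate the bad terms via the maximum principle. Once the linearization is uniformly elliptic, Krylov--Safonov estimates for concave fully nonlinear parabolic equations yield $C^{2,\alpha}$ bounds, and Schauder bootstrapping gives uniform $C^\infty$ estimates. Combining these with short-time existence (available since $M_0$ is smooth and uniformly convex) and a standard continuation argument provides a solution for all $\tau\in[0,\infty)$.

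\textbf{Convergence and main obstacle.} To identify the limit, I would either construct a monotone Lyapunov functional along the rescaled flow whose critical points are constants, or argue by compactness: the uniform $C^\infty$ bounds extract smooth subsequential limits, each of which is a stationary solution of $\tilde u^{\alpha-1}\sigma_k^\beta(\lambda[\tilde u]) = \gamma$; under the exponent conditions $\beta>1/k$ and $\alpha\le 1-k\beta$, the only such soliton is the unit sphere centered at the origin, so one concludes $\tilde u\to 1$. Exponential convergence in $C^\infty$ then follows by linearizing the rescaled equation at the sphere, the linearized operator being self-adjoint with a positive spectral gap once the translation zero modes are excluded by the structure (they are suppressed here because $\alpha<0$ breaks translation invariance of the soliton equation). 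The hardest step I anticipate is the $C^2$ estimate, namely controlling the principal radii from both above and below without any pinching hypothesis on the initial hypersurface. This is precisely where the structural conditions $\beta>1/k$ and $\alpha\le 1-k\beta$ are used: they ensure that the contributions from the $u^\alpha$ factor do not overwhelm the good ellipticity and concavity terms coming from $\sigma_k^\beta$ in the maximum principle argument, and the concavity of $\sigma_k^{1/k}$ then feeds into the Krylov--Safonov step.
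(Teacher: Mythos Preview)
Your outline for the a priori estimates follows the paper's route almost exactly: the $C^0$ bound via the ODE inequalities for $\tilde u_{\max}$ and $\tilde u_{\min}$, a gradient estimate, a maximum principle for the largest eigenvalue of the inverse second fundamental form exploiting concavity of $\sigma_k^{1/k}$ and the sign condition $\alpha\le 1-k\beta$, and then Krylov--Safonov plus bootstrapping. One ingredient you do not name explicitly but which the paper uses to close the $C^2$ estimate is a separate two-sided bound on the scalar quantity $Q=\tilde u^{\alpha-1}\sigma_k^\beta$, obtained from its own evolution equation; once $\max_i h^{ii}\le C$ is known, this is what converts it into $\min_i\lambda_i\ge C^{-1}$.

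Where your proposal diverges from the paper is the convergence step, and there is a genuine gap in the borderline case. You assert that under $\beta>1/k$ and $\alpha\le 1-k\beta$ the only soliton of $\tilde u^{\alpha-1}\sigma_k^\beta=\gamma$ is the unit sphere. This is true when $\alpha<1-k\beta$, but when $\alpha=1-k\beta$ the soliton equation is scale invariant and every sphere centered at the origin solves it; consequently your compactness-plus-uniqueness argument only yields subsequential convergence to \emph{some} sphere, and the linearization acquires an extra zero mode from dilation, so the spectral-gap argument for the exponential rate does not go through as written.

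The paper avoids this entirely by arguing more directly. For $\alpha<1-k\beta$ it sandwiches $\tilde u$ between two explicit spherical solutions $u_1(t),u_2(t)$ of the rescaled ODE that both converge exponentially to $1$, so $\tilde u\to 1$ in $C^0$ exponentially and interpolation against the uniform $C^\infty$ bounds upgrades this to $C^\infty$. For $\alpha=1-k\beta$ it proves a sharpened gradient estimate: at a spatial maximum of $Q=\tfrac12|\nabla\log\tilde u|^2$ one finds $\partial_\tau Q\le -C_0 Q$, hence $|\nabla\log\tilde u|\to 0$ exponentially, and again interpolation gives smooth convergence to a constant. This direct route sidesteps both the soliton-classification issue and the need to analyze the linearized spectrum.
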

We denote by $u=<X,\nu>$ the support function of $M_t$ at $X$.
When $f\equiv 1$, by a direct calculation, we obtain from the flow \eqref{EF1} that the support function $u$ satisfies\\
\begin{equation}\label{EF11}
\left\{
\begin{array}{ll}
\frac{\partial{u}}{\partial{t}}(x,t)&=u^{{\alpha}}{\sigma}_{k}^{\beta},\\
u(.,0)&=u_0.
\end{array}
\right.
\end{equation}
Rescaling the hypersurface $M_t$ in the way of Theorem \ref{main1}, employing a new parameter $\tau$, we get the normalised flow
\begin{equation}\label{NEF1}
\left\{
\begin{array}{ll}
\frac{\partial{u}}{\partial{\tau}}(x,t)&=u^{{\alpha}}{\sigma}_{k}^{\beta}-\gamma u,\\
u(.,0)&=u_0.
\end{array}
\right.
\end{equation}
where \[\gamma=\sigma_k^{\beta}(1,...,1),\]
\begin{equation*}
\tau=\left\{
\begin{array}{lcl}
t &{\rm{if}}  &{\alpha}=1-k\beta,\\
\dfrac{\log[(1-\alpha-\beta k)\gamma t+1]}{1-\alpha-\beta k}&{\rm{ if}} &{\alpha}\ne 1-k\beta,
\end{array}
\right.
\end{equation*}

For general function $f$, under the flow \eqref{EF1}, the support function the support function $u$ satisfies \\
\begin{equation}\label{EF21}
\left\{
\begin{array}{ll}
\frac{\partial{u}}{\partial{t}}(x,t)&=fu^{{\alpha}}{\sigma}_{k}^{\beta},\\
u(.,0)&=u_0.
\end{array}
\right.
\end{equation}
We define
\[
\widetilde{u}=\Big(\frac{|S^n|}{V_{k+1}(\underbrace{u,u,...,u}_{(k+1)-\text{times}})}\Big)^{\frac{1}{k+1}}u,
\]
where the definition of $V_{k+1}(u,u,...,u)$ see Section 2.
Considering the normalised flow of \eqref{EF21} given by
\begin{equation}\label{NEF2}
\left\{
\begin{array}{ll}
\partial_\tau u&=fu^\alpha\sigma_k^{\beta}-u\frac{\int_{S^n}fu^\alpha\sigma_k^{1+\beta}d \mu_{S^n}}{|S^n|},\\
u(.,0)&=u_0.
\end{array}
\right.
\end{equation}
where\[\tau=\int_0^t\Big(\frac{|S^n|}{V_{k+1}(u,u,...,u)}\Big)^{\frac{1-k\beta-\alpha}{k+1}}ds.\]
In \eqref{NEF2}, we still use $u$ instead of $\tilde{u}$.
Consider the following functional which was introduced by Andrews in \cite{And97}
\begin{equation}\label{functional}
\mathcal{J}(u)=\int_{S^n}u\sigma_k(fu^{\alpha-1}\sigma_k^{\beta})^{-\frac{1}{\beta}}d\mu.
\end{equation}
When $\alpha\ge 1-k\beta$, we will prove in Lemma \ref{monotone} that $\mathcal{J}(u)$ is strictly decreasing along the flow\eqref{NEF2} unless $u$ solves the elliptic equation
\begin{equation}\label{elliptic eq}
fu^{\alpha-1}\sigma_k^{\beta}=c,
\end{equation}
where $c$ is a positive constant.

\begin{theorem}\label{main2}
Let $M_0$ be a smooth, closed and uniformly convex hypersurface in $R^{n+1}$, $n\ge2$, enclosing the origin. Suppose $\alpha, \beta\in R^1$, $\beta>\frac{1}{k}$ and $\alpha\le1-k\beta$, $k$ is an integer and $1\le k<n$, $f$ is a smooth positive function on $S^n$ and $\nabla_i\nabla_jf^{\frac{1}{1+k\beta-\alpha}}+\delta_{ij}f^{\frac{1}{1+k\beta-\alpha}}$ is positive definite.
Then the flow \eqref{EF1} has a unique smooth and uniformly convex solution $M_t$ for all time $t>0$. After normalisation, the rescaled hypersurfaces $\widetilde{M_t}$ converge smoothly to a smooth solution of \eqref{elliptic eq}, which is a minimiser of the functional \eqref{functional}.
\end{theorem}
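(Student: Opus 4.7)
The plan is to combine the estimates already developed for the $f\equiv 1$ flow in Theorem \ref{main1} with the additional convexity hypothesis on $f$ to carry them through for general $f$, and then to extract a soliton limit using the monotonicity of $\mathcal J$ along \eqref{NEF2}. The normalisation in \eqref{NEF2} has been chosen precisely to keep the mixed volume $V_{k+1}(u,\ldots,u)$ fixed along the flow, so the first task is to upgrade this integral constraint to pointwise control.

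For the $C^0$ estimate I would apply the parabolic maximum principle at the spatial maximum and minimum of $u$: the non-local term in \eqref{NEF2} is precisely the average of $fu^\alpha\sigma_k^{1+\beta}$ on $S^n$, which sandwiches the pointwise speed and, combined with the preserved mixed volume, yields ODE comparisons $\partial_\tau u_{\max}\le 0$ once $u_{\max}$ is large and $\partial_\tau u_{\min}\ge 0$ once $u_{\min}$ is small. This gives $0<C_1\le u\le C_2$ uniformly in $\tau$. The $C^1$ estimate then follows from convexity, since a convex body sandwiched between two concentric spheres has bounded support-function gradient.

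The main obstacle is the $C^2$ estimate, namely uniform two-sided bounds on the eigenvalues of the symmetric tensor $b_{ij}=\nabla_i\nabla_j u+u\delta_{ij}$. For the upper bound I would run the standard test-function argument on an auxiliary quantity of the form $W=\log\lambda_{\max}(b)-A\log u+Bu^2$, differentiate along \eqref{NEF2}, and evaluate at a first maximum. After applying the usual commutator identities to handle the anisotropic speed, the bad term involving derivatives of $f$ is forced to carry precisely the expression $\nabla_i\nabla_j f^{1/(1+k\beta-\alpha)}+\delta_{ij}f^{1/(1+k\beta-\alpha)}$ contracted with the positive operator $F^{ij}=\partial F/\partial b_{ij}$, where $F=\sigma_k^\beta$; the hypothesis on $f$ supplies the correct sign. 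This is the parabolic counterpart of the convexity condition appearing in the Guan--Ma and Guan--Xia treatments of the $L_p$ Christoffel--Minkowski problem, and the exponent $1/(1+k\beta-\alpha)$ is dictated by the homogeneity of the flow speed. The range $\beta>1/k$ is used to guarantee the concavity/ellipticity structure needed in this computation. The lower bound on $\lambda_{\min}(b)$ follows from the uniform control of $fu^{\alpha-1}\sigma_k^\beta$ obtained from the $\mathcal J$-monotonicity combined with the $C^0$ bound, via a dual test-function argument. Together these establish uniform pinching of the principal radii.

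With uniform $C^2$ bounds and strict convexity in place \eqref{NEF2} becomes uniformly parabolic; the Krylov--Safonov and Evans--Krylov machinery then yields $C^{2,\gamma}$ bounds, and Schauder theory bootstraps to uniform-in-time $C^\infty$ estimates, giving long-time existence by standard continuation. For convergence, the monotonicity of $\mathcal J$ along \eqref{NEF2}, together with $\mathcal J$ being bounded below on the normalised class, forces the dissipation integrand to tend to zero. Any $C^\infty$ subsequential limit $u_\infty$ therefore solves the soliton equation $fu_\infty^{\alpha-1}\sigma_k^\beta=c$ and is a minimiser of $\mathcal J$ in the normalised class. A standard argument, using the $C^\infty$ a priori bounds and the strict monotonicity of $\mathcal J$ away from critical points, upgrades subsequential convergence to full smooth convergence of $\widetilde M_t$ to a solution of \eqref{elliptic eq}, completing the proof.
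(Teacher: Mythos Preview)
Your overall architecture (a priori estimates $\Rightarrow$ uniform parabolicity $\Rightarrow$ Krylov/Schauder $\Rightarrow$ long-time existence $\Rightarrow$ convergence via monotonicity of $\mathcal J$ and uniqueness of the soliton) matches the paper. However, the $C^2$ step contains a genuine gap: you have the role of the convexity hypothesis on $f$ reversed.

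In the paper the crucial use of the assumption that $\nabla_i\nabla_jf^{1/(1+k\beta-\alpha)}+\delta_{ij}f^{1/(1+k\beta-\alpha)}>0$ is to obtain a \emph{lower} bound on the smallest principal radius, not an upper bound on the largest one. Concretely, the paper (Lemma~\ref{boundsprincipleradii2}) applies the maximum principle to $h^{11}/u$, where $h^{11}$ is the largest eigenvalue of the inverse of $(b_{ij})$; after estimating the cross term $2k\beta F^{k\beta-1}\nabla_1 F\,\nabla_1\Phi$ via Young's inequality, the evolution inequality for $h^{11}/u$ acquires the factor
\[
-\frac{f_{11}}{f}+\frac{k\beta-\alpha}{1+k\beta-\alpha}\Big(\frac{f_1}{f}\Big)^2-(1+k\beta-\alpha)
\]
in the $11$-direction only (it is \emph{not} contracted with $F^{ij}$ as you suggest), and it is exactly the positivity of $(f^{1/(1+k\beta-\alpha)})_{11}+f^{1/(1+k\beta-\alpha)}$ that makes this quantity negative. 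The upper bound on the principal radii is then deduced \emph{afterwards}, by combining $\lambda_i\ge C^{-1}$ with the already-established bound $\sigma_k\le C$.

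Your plan runs the other way: you propose to use the $f$-condition in the upper-bound argument on $\lambda_{\max}(b)$, and to extract the lower bound on $\lambda_{\min}(b)$ from the two-sided control of $fu^{\alpha-1}\sigma_k^\beta$. The second step fails for $1\le k<n$: a uniform positive lower bound on $\sigma_k$ does not prevent an individual eigenvalue of $(b_{ij})$ from tending to zero, since $\sigma_k$ only samples $k$ eigenvalues at a time (e.g.\ $n=3$, $k=2$, $\lambda=(0,1,1)$ gives $\sigma_2=1$). This is precisely why the paper, and the Guan--Ma/Hu--Ma--Shen/Ivaki line of work it follows, imposes the convexity condition on $f$: it is the mechanism that keeps the smallest radius away from zero. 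Your proof as written would not close; you need to redirect the test-function argument to $h^{11}$ (equivalently $1/\lambda_{\min}$) and deploy the $f$-hypothesis there.

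A smaller point: your $C^0$ sketch implicitly needs a uniform positive lower bound on the nonlocal term $\eta(t)$, and your sandwich claim for $\eta$ is not quite right as stated. In the paper this bound is obtained separately (Lemma~\ref{boundeta}) via an Alexandrov--Fenchel/H\"older argument on the functionals $\mathcal Z_p$, and the actual route to the $C^0$ bound (Lemma~\ref{C0estimate2 boundsigma2}) goes through the two-sided bound on $Q=fu^{\alpha-1}\sigma_k^\beta$ together with the gradient estimate $|\nabla\log u|\le C$ and the normalisation $\int_{S^n}u\sigma_k=|S^n|$, rather than a direct max/min ODE comparison on $u$.
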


For $k=n$, we obtain the same result without constrait on $f$ as follows.
\begin{theorem}\label{main3}
 Let $M_0$ be a smooth, closed and uniformly convex hypersurface in $R^{n+1}$, $n\ge2$, enclosing the origin. Suppose $\alpha \in R^1$, $\beta>\frac{1}{n}$ and $\alpha\le1-n\beta$.
Then for any $f$  smooth positive function on $S^n$, the flow \eqref{EF1} has a unique smooth and uniformly convex solution $M_t$ for all time $t>0$. After normalisation, the rescaled hypersurfaces $\widetilde{M_t}$ converge smoothly to a smooth solution of \eqref{elliptic eq}, which is a minimiser of the functional \eqref{functional}.
\end{theorem}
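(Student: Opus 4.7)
The plan is to follow the same overall architecture as the proof of Theorem \ref{main2}: establish uniform $C^0$, $C^1$ and $C^2$ a priori bounds for the normalised flow \eqref{NEF2} (viewed as a fully nonlinear parabolic equation for the support function $u$ on $S^n$), bootstrap to $C^\infty$ via Krylov--Safonov and Evans--Krylov together with Schauder estimates, deduce long-time existence, and finally use the monotonicity of the functional $\mathcal{J}$ from \eqref{functional} to extract smooth subsequential convergence to a solution of \eqref{elliptic eq}. The novelty in Theorem \ref{main3} is that when $k=n$ the convexity assumption $\nabla_i\nabla_j f^{1/(1+n\beta-\alpha)}+\delta_{ij}f^{1/(1+n\beta-\alpha)}>0$ used in Theorem \ref{main2} can be dropped, and the task is to locate the estimate in which this hypothesis was used and replace it.

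For the $C^0$ step I would bound $u$ from above and below by positive constants independent of $\tau$. The normalisation in \eqref{NEF2} fixes the mixed volume $V_{n+1}(u,\ldots,u)$ and hence the enclosed volume of $\widetilde M_\tau$, which, combined with convexity, yields a uniform diameter bound. Comparing the two sides of \eqref{NEF2} at the space-maximum and space-minimum of $u$ then gives $c_1\le u\le c_2$, and the $C^1$ bound follows at once from the diameter bound and convexity; these parts go through without any structural condition on $f$, exactly as in Theorem \ref{main2}.

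The crux is the $C^2$ estimate, i.e.\ the upper bound on the largest principal radius of curvature $\lambda_{\max}$ of $\widetilde M_\tau$. When $k=n$ the speed takes the Monge--Amp\`ere form $f u^\alpha\det(W)^\beta$ with $W_{ij}=u_{ij}+u\delta_{ij}$, and I would replace the auxiliary function used in the proof of Theorem \ref{main2}, whose maximum-principle analysis was exactly what forced the condition on $\nabla^2 f^{1/(1+k\beta-\alpha)}$, by one of the form $\log\lambda_{\max}(W)-A\log u$ (or $\log\mathrm{tr}(W)$ with a coercive correction). Because $\beta\log\det W$ is concave in $W$, the second-derivative terms produced by differentiating $\log f$ twice at the maximum point can be absorbed into the good concavity term coming from the linearisation of $\beta\log\det$; this is precisely the Pogorelov-type mechanism available for any Monge--Amp\`ere equation with smooth positive right-hand side, and it is the reason no structural constraint on $f$ is needed when $k=n$. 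The corresponding lower bound on $\lambda_{\min}$ follows from the equation itself together with the already-established bounds on $u$, $|\nabla u|$ and $\lambda_{\max}$.

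With the full uniform $C^2$ bound in hand, Evans--Krylov (exploiting the concavity of $\beta\log\det$ in $W$) and Schauder give the $C^\infty$ estimates required for long-time existence via a standard continuation argument. Convergence then proceeds exactly as in Theorem \ref{main2}: by the monotonicity lemma for $\mathcal{J}$ along \eqref{NEF2}, the functional is non-increasing and constant only at a solution of \eqref{elliptic eq}, so any smooth subsequential limit of $\widetilde M_\tau$ is such a solution and a minimiser of $\mathcal{J}$, and the full smooth convergence is obtained by the usual interpolation and uniqueness-of-limit argument. The main obstacle is therefore the $C^2$ bound on $\lambda_{\max}$: verifying that the auxiliary function above admits a maximum-principle estimate with no assumption on $f$ beyond smoothness and positivity is where the argument must be done carefully.
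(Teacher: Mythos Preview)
Your architecture matches the paper's, and you correctly isolate the $C^2$ estimate as the only step where the structural condition on $f$ from Theorem \ref{main2} must be replaced when $k=n$. The implementation differs from what you describe, though. In Lemma \ref{boundsprincipleradii3} the paper bounds the largest eigenvalue of the \emph{inverse} $h^{ij}$ (hence $\lambda_{\min}$ from below), recovering the upper bound on $\lambda_i$ only afterwards from the bound on $\sigma_n$; the auxiliary function is $w=\log h^{11}-\varepsilon\log u+\tfrac{M}{2}(u^{2}+|\nabla u|^{2})$, with an additional gradient correction. More importantly, your explanation of the mechanism is not the one that works here: the concavity of $\beta\log\det$ handles the third-order $u$-terms as usual, but it does \emph{not} absorb $\nabla^{2}f$. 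What is specific to $k=n$ is that $\sigma_n^{ij}=\sigma_n h^{ij}$, so the linearised operator has principal part $\beta\Phi\sigma_n^{\beta}h^{ij}\nabla_i\nabla_j$; applied to $-\varepsilon\log u$ this produces the term $-\beta\varepsilon\sum_i h^{ii}$, and since $\sum_i h^{ii}\ge h^{11}$ one obtains a coercive $-\beta(\varepsilon-1)h^{11}$ at the maximum that dominates the $C\,h^{11}$ contributed by $(\nabla_1\Phi/\Phi)^{2}-\nabla_1\nabla_1\Phi/\Phi$ once $\varepsilon$ is chosen large depending only on $\|f\|_{C^2}$. For $k<n$ one has $\sigma_k^{ij}\neq\sigma_k h^{ij}$ and this device is unavailable, which is why the extra hypothesis on $f$ was needed in Lemma \ref{boundsprincipleradii2}. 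Finally, a small correction on the $C^0$ step: fixed enclosed volume together with convexity does not by itself give a diameter bound; the paper instead first bounds $Q=fu^{\alpha-1}\sigma_n^{\beta}$ (Lemma \ref{bound usigmak2}) and $|\nabla\log u|$ (Lemma \ref{C1estimate2}), and then extracts the two-sided bound on $u$ from the normalisation $\int_{S^n} u\sigma_n\,d\mu=|S^n|$ (Lemma \ref{C0estimate2 boundsigma2}).
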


\begin{remark}
When $\beta=1$, our second flow \eqref{EF1} is just the one that Ivaki has studied recently \cite{Iva2018}. In \cite{Iva2018}, Ivaki employed the functional in \cite{GuX18} to prove that the flow  $\partial_tu=fu^\alpha\sigma_k$ has a unique smooth solution, and the rescaled flow converges smoothly to a homothetic self-similar solution which is a solution
 $fu^{\alpha-1}\sigma_k=c$ for $k<n$, $\alpha\le1-k$ and positive function $f$ satisfies that $(\nabla_i\nabla_jf^{\frac{1}{1+k-\alpha}}+\delta_{ij}f^{\frac{1}{1+k-\alpha}})$ is positive definite. When $k=n$, he obtained the same result without imposing any condition on $f$. It has been obtained by Chou and Wang in \cite{ChWang06}. Before Ivaki \cite{Iva2018}, Hu et al. \cite{HMS04} proved the existence result for the $L_p$ Christoffel Minkowski problem for $p=2-\alpha$: $fu^{\alpha-1}\sigma_k=1$,  $\alpha\le1-k$, $1\le k\le n-1$  for any positive function $f$ satisfying $(\nabla_i\nabla_jf^{\frac{1}{1+k-\alpha}}+\delta_{ij}f^{\frac{1}{1+k-\alpha}})\ge0$.
 Under the condition that $f$ is positive even fuction and satisfies $(\nabla_i\nabla_jf^{\frac{1}{1+k-\alpha}}+\delta_{ij}f^{\frac{1}{1+k-\alpha}})\ge0$, Guan and Xia in \cite{GuX18} obtained the existence result for $1-k<\alpha<1$.
 \end{remark}

We still denote by $u=<X,\nu>$ the support function of $M_t$ at $X$. We show the condition $\alpha\le1-k\beta$ is necessary. In fact, by use of the method of \cite{LSW16, LSW17}, we  show
\begin{theorem}\label{main4}
Suppose $\alpha>1-k\beta$, $\alpha, \beta\in \R$ and $\beta>0$, $k$ is an integer and $1\le k\le n$. There exist a smooth, closed, uniformly convex hypersurface $M_0$, such that under the flow \eqref{EF1},
\begin{equation}\label{ratio}
R(X(\cdot, t)):=\frac{\max_{S^n}u(\cdot, t)}{\min_{S^n}u(\cdot, t)}\to\infty,\qquad as\qquad t\to T
\end{equation}
for some $T>0$.
\end{theorem}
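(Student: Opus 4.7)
The plan is to construct, for any parameters with $\alpha+k\beta>1$, a family of smooth, closed, uniformly convex initial hypersurfaces $M_0^{\epsilon}$ (one for each small $\epsilon>0$) for which $R(X(\cdot,t))\to\infty$ before the overall shape can settle, following the method of Li--Sheng--Wang \cite{LSW16,LSW17}. After rewriting \eqref{EF1} as the support-function PDE
\begin{equation*}
\partial_t u \;=\; f(\nu)\, u^{\alpha}\, \sigma_k(\nabla^2 u+u I)^{\beta} \qquad \text{on } S^n\times[0,T),
\end{equation*}
I would fix an axis $e\in S^n$ and take $M_0^{\epsilon}$ to be a smooth, rotationally symmetric, uniformly convex perturbation of the sphere of radius $(1+\epsilon)/2$ centered at $((1-\epsilon)/2)\,e$, so that the support function satisfies $u_0(e)=1$ and $u_0(-e)=\epsilon$, with the origin enclosed by a margin of $\epsilon$ in the $-e$ direction.

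The main step is to extract pointwise ODE comparisons for $u^{\pm}(t):=u(\pm e,t)$. These satisfy
\begin{equation*}
\tfrac{d}{dt}u^{\pm}(t) \;=\; f(\pm e)\,(u^{\pm}(t))^{\alpha}\,\sigma_k(W(\pm e,t))^{\beta},
\end{equation*}
and, using that rotational symmetry about $e$ is preserved by the flow together with parabolic continuity, one controls $\sigma_k(W(\pm e,t))$ above and below by constants times $(u^{\pm}(t))^k$ on a short time interval $[0,\tau_0]$ depending only on $M_0^{\epsilon}$. Comparison with the scalar ODE $y'=c\,y^{\alpha+k\beta}$ then produces a lower barrier for $u^{+}$ and an upper barrier for $u^{-}$. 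Because $\alpha+k\beta>1$, the lower barrier starting from $u^+(0)=1$ blows up at a fixed finite time $T_1>0$, whereas the upper barrier starting from $u^-(0)=\epsilon$ has blow-up time of order $\epsilon^{1-\alpha-k\beta}$, which tends to $+\infty$ as $\epsilon\to 0$. Taking $\epsilon$ small enough so this dominates $T_1$, we conclude $u(e,t)\to\infty$ while $u(-e,t)$ stays bounded as $t\to T\le T_1$, and hence $R(X(\cdot,t))\ge u(e,t)/u(-e,t)\to\infty$.

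The main obstacle is justifying the two-sided bound $\sigma_k(W(\pm e,t))\simeq (u^{\pm}(t))^k$ on the time interval needed for this blow-up. The naive maximum-principle estimate at a maximum of $u$ only gives $W\le uI$, hence $\sigma_k(W)\le\binom{n}{k}u^k$, which is precisely the wrong direction for the lower ODE bound on $u^+$. Overcoming this is where the rotationally symmetric construction is essential: it reduces $W(\pm e,t)$ to the two principal radii of the axial profile curve at its apex, whose short-time behavior can be read off the profile equation and remains comparable to $u^{\pm}(t)$ for a definite time depending only on the fixed geometry of $M_0^{\epsilon}$ and standard parabolic regularity. This short-time geometric control, combined with the finite-time ODE blow-up driven by the hypothesis $\alpha+k\beta>1$, is the technical heart of the Li--Sheng--Wang mechanism and yields the claimed divergence of the ratio.
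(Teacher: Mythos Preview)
Your proposal has a genuine gap at the central technical step. For the off-center sphere you describe, with $u_0(-e)=\epsilon$, the principal curvature radii at the point with outer normal $-e$ are all equal to the sphere's radius $(1+\epsilon)/2$, so $\sigma_k(W(-e,0))=\binom{n}{k}((1+\epsilon)/2)^k\asymp 1$, \emph{not} $\epsilon^k$. Thus the claimed two-sided comparison $\sigma_k(W(\pm e,t))\simeq (u^{\pm}(t))^k$ already fails at $t=0$ for your initial data, and the upper ODE barrier for $u^{-}$ is wrong by a factor of $\epsilon^{-k\beta}$; the resulting ``blow-up time'' for $u^-$ is then of order $\epsilon^{1-\alpha}$ rather than $\epsilon^{1-\alpha-k\beta}$, and this need not tend to $+\infty$ when $\alpha\ge 1$ (which is allowed under the hypothesis $\alpha>1-k\beta$). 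More broadly, rotational symmetry only forces the $n$ principal radii at the pole to coincide; it does not tie their common value to the support function $u$ at that pole, since $u$ measures distance from the origin while the radii are intrinsic. And even if one engineered initial data with $\sigma_k(W(\pm e,0))\simeq (u^{\pm}(0))^k$, invoking ``short-time parabolic regularity'' cannot propagate this all the way to the blow-up time of $u^{+}$, where the geometry is degenerating.

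The paper's proof (Section~5) proceeds quite differently and avoids tracking $\sigma_k/u^k$ along the solution. It passes to the dual picture $r^*=1/u$, under which \eqref{EF11} becomes a contracting equation \eqref{r*t} for the radial function, and then constructs an \emph{explicit} $C^{1,1}$ rotationally symmetric sub-solution $\widehat M_t$, given near the origin by the piecewise graph $\psi(\rho,t)$ with carefully chosen exponents $\theta,\mu$ depending on $q=k\beta+1-(2-\alpha)>0$. One checks by direct computation that $\psi$ is a sub-solution to \eqref{a*}, extends it to a closed convex barrier containing a fixed ball $B_1(z)$, and uses the comparison principle to trap an actual solution between the shrinking barrier (which touches the origin in finite time) and the ball $B_{1/2}(z)$ (which persists). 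This yields $\min r\to 0$ while $\max r\ge 10$, hence \eqref{ratio}. The explicit barrier is the Li--Sheng--Wang mechanism you allude to; the pointwise ODE comparison you propose is not.
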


This paper is organised as follows. In Section 2, we recall some properties of convex hypersurfaces and show that the functional \eqref{functional} is strictly decreasing along the normalised flow \eqref{NEF2} unless $u$ satisfies the elliptic equation \eqref{elliptic eq}. In Section 3, we establish the a priori estimates, which ensure the long time existence of the normalized flows. In Section 4, we show that the flow \eqref{EF1} converge to the unit sphere (i.e. Theorem \ref{main1}) and complete the proofs of Theorem \ref{main2} and Theorem \ref{main3}. Finally, in Section 5, we prove Theorem \ref{main4}.

\section{Preliminary}
We recall some basic notations at first.
Let $M$ be a smooth, closed, uniformly convex hypersurface in $R^{n+1}$.  Assume that $M$ is parametrized by the inverse Gauss map $X:S^n\to M\subset R^{n+1}$ and encloses origin.
The support function $u:S^n\to R^1$ of $\mathit{M}$ is defined by
$$u(x)=\sup_{y\in \mathit{M}}\langle x,y\rangle.$$
The supremum is attained at a point $y=X(x)$, $x$ is the outer normal of $M$ at $y$. Hence
\[
u(x)=\langle x,X(x)\rangle.
\]

Let $e_1, \cdots, e_n$ be a smooth local orthonormal frame field on $S^n$, and ${\nabla}$ the covariant derivative with respect to the standard metric $e_{ij}$ on $S^n$. Denote by $g_{ij}$, $g^{ij}$, $h_{ij}$ the metric, the inverse of the metric and the second fundamental form of $M$, respectively. Then the second fundamental form of $M$ is given by (see e. g. \cite{Urb91})
\[
h_{ij}=\nabla_i\nabla_ju+ue_{ij},
\]and $h_{ij}$ is symmetric and satisfies the Codazzi equation
\[\nabla_ih_{jk}=\nabla_jh_{ik}.\]
By the Gauss-Weingarten formula
$$\nabla_ix=h_{jk}g^{kl}\nabla_lX,$$
we get
$$e_{ij}=\langle\nabla_ix,\nabla_jx\rangle=h_{ik}g^{kl}h_{jm}g^{ms}\langle\nabla_lX,\nabla_sX\rangle=g^{kl}h_{ik}h_{jl}.$$
Since $M$ is uniformly convex, $h_{ij}$ is invertible. Hence the principal curvature radii are the eigenvalues of the matrix
 \[
 b_{ij}=h^{ik}g_{jk}=\nabla_{ij}^2u+u\delta_{ij}.
 \]

Let $\sigma_k(A)$ be the $k$-th elementary symmetric function defined on the set $\mathcal{M}_n$ of $n\times n$ matrices and $\sigma_k(A_1,\cdots,A_k)$ be the complete polarization of $\sigma_k$ for $A_i\in\mathcal{M}_n$, $i=1,\cdots,k$,
i.e.
\[\sigma_k(A_1,\cdots,A_k)=\frac{1}{k!}\sum_{{i_1,...,i_k=1}
; {j_1,...,j_k=1}}^n\delta_{j_1,...,j_k}^{i_1,...,i_k}{({A}_{1}})_{i_1j_1}\cdots ({A_{k}})_{i_kj_k}.\]
Let $\Gamma_k$ be Garding's cone
\[\Gamma_k=\{A\in\mathcal{M}_n:\sigma_i(A)>0, i=1,...,k\}.\]
For a function $u\in C^2(S^n)$, we denote by $W_u$ the matrix
\[W_u:=\nabla^2u+uI.\]
In the case $W_u$ is positive definite, the eigenvalue of $W_u$ is the principal radii of a strictly convex hypersurface with support function $u$.
Let $u^i\in C^2(S^n)$, $i=1,\cdots,n+1$. Set
\[
V(u^1,u^2,\cdots,u^{n+1}):=\int_{S^n}u^1\sigma_n[W_{u^2},\cdots,W_{u^{n+1}}]d\mu,
\]
\[
V_{k+1}(u^1,u^2,\cdots,u^{k+1}):=V(u^1,u^2,\cdots,u^{k+1},1,\cdots,1).
\]
Therefore we define the $k+1$-th volume by
\[V_{k+1}(\underbrace{u,\cdots,u}_{(k+1)-\text{ times}}):=\int_{S^n} u\sigma_k[W_u,\cdots,W_u]d\mu.\]
Next, we state the well-known Alexandrov-Fenchel inequality.
\begin{lem}\label{AFI}(\cite{Sch93})
Let $u^i\in C^2(S^n)$, $i=1,2,\cdots,k$ be such that $u^i>0$ and $W_{u^i}\in\Gamma_k$ for $i=1,2,\cdots,k$. Then for any $v\in C^2(S^n)$, the Alexandrov-Fenchel inequality holds:
\[V_{k+1}(v,u^1,\cdots,u^k)^2\ge V_{k+1}(v,v,u^2,\cdots,u^k)V_{k+1}(u^1,u^1,u^2,\cdots,u^k),\]
the equality holds if and only if $v=au^1+\sum_{l=1}^{n+1}a_lx_l$ for some constants $a,a_1,\cdots,a_{n+1}$.
\end{lem}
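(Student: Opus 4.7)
The Alexandrov--Fenchel inequality is classical and the paper cites Schneider's monograph \cite{Sch93}, so I would not reprove it in the body. However, here is the strategy I would use for a self-contained sketch in the smooth setting employed here.

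The plan is to reformulate the inequality as a reverse Cauchy--Schwarz inequality for a symmetric bilinear form. Fix $u^2,\ldots,u^k\in C^2(S^n)$ with $W_{u^i}\in\Gamma_k$ and set
\[
B(v,w):=V_{k+1}(v,w,u^2,\ldots,u^k).
\]
Using the divergence-free property of the mixed cofactor matrices $\sigma_k^{ij}[W_{u^2},\ldots,W_{u^k},W_w]$ (a consequence of Newton's identities together with the Codazzi equation $\nabla_i h_{jk}=\nabla_j h_{ik}$) and integration by parts on $S^n$, one checks that $B$ is symmetric in its two free entries. The lemma is then equivalent to the reverse Cauchy--Schwarz inequality
\[
B(v,u^1)^2 \;\ge\; B(v,v)\, B(u^1,u^1).
\]

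To establish this reverse Cauchy--Schwarz, I would prove the Lorentz-type (``hyperbolic'') spectral property of the associated self-adjoint operator $L$ on $C^2(S^n)$: namely, $L$ has exactly one positive eigenvalue, with one-dimensional eigenspace spanned by $u^1$, while its kernel coincides with the span of the coordinate functions $x_1,\ldots,x_{n+1}$ on $S^n$. The approach is by deformation. At the base point $u^1=\cdots=u^k\equiv 1$, $L$ is a constant multiple of $\Delta_{S^n}+n$, and the claim is read off from the spherical-harmonic decomposition: one positive eigenvalue on constants, eigenvalue $0$ on linear functions, and strictly negative eigenvalues on higher harmonics. I would then deform $u^2,\ldots,u^k$ along a smooth path in the open cone $\{W_u\in\Gamma_k\}$ and argue that the signature of $L$ is locally constant because the kernel cannot leave $\mathrm{span}\{x_1,\ldots,x_{n+1}\}$. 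The equality case falls out of the reverse Cauchy--Schwarz: equality forces $v\in \mathrm{span}\{u^1\}\oplus\ker L$, that is, $v=au^1+\sum_{l=1}^{n+1} a_l x_l$.

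The main obstacle is the Lorentz-type spectral property itself, specifically the claim that along the deformation no eigenvalue crosses zero except through the translation kernel. This is where Aleksandrov's uniqueness theorem for the linearised mixed Hessian equation $\sigma_k^{ij}[W_{u^2},\ldots,W_{u^k}](\nabla_i\nabla_j v+v\delta_{ij})=0$ is essential, and it is by far the hardest ingredient. An alternative route that bypasses the deformation argument is Aleksandrov's original combinatorial proof, which first establishes the inequality for support functions of convex polytopes and then extends to smooth $u^i$ by approximation; this is the path actually followed in the source \cite{Sch93}.
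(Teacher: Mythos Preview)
Your instinct matches the paper exactly: the authors do not prove this lemma at all --- it is stated with the citation \cite{Sch93} and used as a black box. So ``do not reprove it in the body'' is precisely what the paper does.

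Your sketch is nonetheless a correct outline of one of the two classical routes (Hilbert's hyperbolic-form argument: symmetrise $B$, show the Lorentzian signature of the associated self-adjoint operator by continuity from the round case, read off the reverse Cauchy--Schwarz and its equality case). The cited source \cite{Sch93} actually takes the other route you mention at the end --- Aleksandrov's polytope proof followed by approximation --- so if you want the sketch to mirror the reference, the last paragraph of your proposal is the relevant one. Either way, for the purposes of this paper there is nothing to add beyond the citation.
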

We consider the flow \eqref{NEF2}. For convenience we still use $t$ instead of $\tau$ to denote the time variable
if no confusions arise, and we set
\begin{center}
$\rho=fu^{\alpha-1}\sigma_k^{\beta}$,
\qquad$\sigma[f]=\sigma_k[W_f,W_u,...,W_u]$\\
$\mathcal{Z}_p(u)=\int_{S^n}u\sigma_k(fu^{\alpha-1}\sigma_k^{\beta})^pd\mu=\int_{S^n}u\sigma_k\rho^pd\mu$,
\qquad$\eta(t)=\frac{\int_{S^n}fu^\alpha\sigma_k^{1+\beta}d \mu}{|S^n|}=\frac{\mathcal{Z}_1}{|S^n|}$,
\end{center}
where $p\in R^1$. We mention the fact that $\int_{S^n} u\sigma_k d\mu=|S^n|$ here which comes from the scaling of $\tilde u$.
Hence \eqref{NEF2} can be written as $\frac{\partial u}{\partial t}=\rho u-u\frac{\mathcal{Z}_1}{|S^n|}$.
By a direct calculation, we have
\begin{eqnarray}
&&\partial_t\mathcal{Z}_p(u)\nonumber\\
&=&\int_{S^n}(\rho u-u\frac{\mathcal{Z}_1}{|S^n|})\sigma_k\rho^{p}d\mu+\int_{S^n}ku\sigma[\rho u-u\frac{\mathcal{Z}_1}{|S^n|}]\rho^{p}d\mu
\nonumber\\
&&+\int_{S^n}p\rho^{p-1}u\sigma_k\Big((\alpha-1)fu^{\alpha-2}(\rho u-u\frac{\mathcal{Z}_1}{|S^n|})\sigma_k^{\beta}
+k\beta fu^{\alpha-1}\sigma_k^{\beta-1}\sigma[\rho u-u\frac{\mathcal{Z}_1}{|S^n|}]\Big)d\mu
\nonumber\\
&=&\mathcal{Z}_{1+p}-\frac{\mathcal{Z}_1\mathcal{Z}_{p}}{|S^n|}+k\int_{S^n}u\sigma[\rho u]\rho^{p}d\mu_{S^n}-k\frac{\mathcal{Z}_1\mathcal{Z}_{p}}{|S^n|}
\nonumber\\
&&+p(\alpha-1)\mathcal{Z}_{1+p}-p(\alpha-1)\frac{\mathcal{Z}_1\mathcal{Z}_{p}}{|S^n|}+k\beta p\int_{S^n}u\sigma[\rho u]\rho^{p}d\mu-k\beta p\frac{\mathcal{Z}_1\mathcal{Z}_{p}}{|S^n|}
\nonumber\\
&=&\big(1+p(\alpha-1)\big)\Big(\mathcal{Z}_{1+p}-\frac{\mathcal{Z}_1\mathcal{Z}_p}{|S^n|}\Big) +k(1+\beta p)\Big(\int_{S^n}u\rho^p\sigma[\rho u]d\mu-\frac{\mathcal{Z}_1\mathcal{Z}_p}{|S^n|}\Big).
\nonumber\end{eqnarray}

\begin{lem}\label{boundeta}
Suppose $\alpha\le1-k\beta$, $\beta>\frac{1}{k}$, $\eta(t)$ is uniformly bounded.
\end{lem}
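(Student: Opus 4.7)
The plan is to show that $\mathcal{Z}_1(t)=|S^n|\,\eta(t)$ is non-increasing along the normalised flow \eqref{NEF2}, which immediately yields the upper bound $\eta(t)\le \eta(0)$; a matching positive lower bound will be extracted from H\"older's inequality together with a parallel monotonicity of the Andrews functional $\mathcal{J}=\mathcal{Z}_{-1/\beta}$.

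For the upper bound, I specialise the general evolution identity for $\mathcal{Z}_p$ derived just before the lemma to $p=1$, obtaining
\[
\partial_t \mathcal{Z}_1 \;=\; \alpha\Bigl(\mathcal{Z}_2-\tfrac{\mathcal{Z}_1^2}{|S^n|}\Bigr) \;+\; k(1+\beta)\Bigl(\int_{S^n} u\rho\,\sigma[\rho u]\,d\mu - \tfrac{\mathcal{Z}_1^2}{|S^n|}\Bigr).
\]
Because $\beta>1/k$, the hypothesis $\alpha\le 1-k\beta$ forces $\alpha<0$. Introducing the probability measure $d\nu:=u\sigma_k\,d\mu/|S^n|$ (recall $\int_{S^n} u\sigma_k\,d\mu=|S^n|$ is preserved), the first bracket equals $|S^n|\,\mathrm{Var}_{d\nu}(\rho)\ge 0$, so the first summand is $\le 0$. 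For the second bracket I invoke the Alexandrov--Fenchel inequality of Lemma \ref{AFI} with $v=\rho u$ and $u^1=\cdots=u^k=u$:
\[
V_{k+1}(\rho u,u,\ldots,u)^2 \;\ge\; V_{k+1}(\rho u,\rho u,u,\ldots,u)\cdot V_{k+1}(u,\ldots,u).
\]
The polarisation identities $V_{k+1}(\rho u,u,\ldots,u)=\mathcal{Z}_1$ and $V_{k+1}(\rho u,\rho u,u,\ldots,u)=\int_{S^n} u\rho\,\sigma[\rho u]\,d\mu$, together with the conserved quantity $V_{k+1}(u,\ldots,u)=|S^n|$, turn this into $\int_{S^n} u\rho\,\sigma[\rho u]\,d\mu\le \mathcal{Z}_1^2/|S^n|$; multiplied by the positive factor $k(1+\beta)$ the second summand is therefore also $\le 0$. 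Hence $\partial_t \mathcal{Z}_1\le 0$, which gives $\eta(t)\le \eta(0)$.

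For the lower bound, H\"older with exponents $1+\beta$ and $(1+\beta)/\beta$ applied to the factorisation $1=\rho^{1/(1+\beta)}\rho^{-1/(1+\beta)}$ against the measure $u\sigma_k\,d\mu$ gives
\[
|S^n| \;=\; \int_{S^n} u\sigma_k\,d\mu \;\le\; \mathcal{Z}_1^{\frac{1}{1+\beta}}\,\mathcal{J}^{\frac{\beta}{1+\beta}},
\]
so it suffices to bound $\mathcal{J}(t)$ from above. This I obtain by substituting $p=-1/\beta$ into the same evolution formula: the coefficient $k(1+\beta p)$ vanishes, leaving only $\tfrac{\beta+1-\alpha}{\beta}\bigl(\mathcal{Z}_{1-1/\beta}-\mathcal{Z}_1\mathcal{J}/|S^n|\bigr)$, and Chebyshev's integral inequality applied to the oppositely monotone pair $\rho$ and $\rho^{-1/\beta}$ against $d\nu$ shows this bracket is $\le 0$. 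Consequently $\mathcal{J}(t)\le \mathcal{J}(u_0)$ and $\eta(t)\ge |S^n|^{\beta}/\mathcal{J}(u_0)^{\beta}$.

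The only point requiring real care is the application of Alexandrov--Fenchel above: the hypothesis $W_{u^i}\in \Gamma_k$ in Lemma \ref{AFI} is immediate for $u^i=u$ since $M_t$ is uniformly convex, while $v=\rho u$ need only lie in $C^2(S^n)$ (no sign condition on $W_{\rho u}$), which is automatic from the smoothness of the solution. Everything else is a direct sign count on the two explicit summands of $\partial_t \mathcal{Z}_1$ and a one-line H\"older estimate.
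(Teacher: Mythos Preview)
Your argument is correct and follows the same route as the paper. For the upper bound you and the paper both set $p=1$ and combine $\alpha<0$ with the Alexandrov--Fenchel inequality $\mathcal{Z}_1^2\ge |S^n|\int u\rho\,\sigma[\rho u]\,d\mu$; the paper reaches the latter after an integration-by-parts rewriting, whereas you quote it directly from Lemma~\ref{AFI}, but the content is identical. For the lower bound you take $p=-1/\beta$, which makes the coefficient $k(1+\beta p)$ vanish and reduces everything to the covariance-type bound $\mathcal{Z}_{1-1/\beta}\le \mathcal{Z}_1\mathcal{J}/|S^n|$; the paper carries out the computation for all $\theta\le -1/\beta$ (invoking Alexandrov--Fenchel again for the extra term), but then only needs $\theta=-1/\beta$ in the final H\"older step, so your choice is simply the clean special case of their argument. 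One terminological remark: what you call ``Chebyshev's integral inequality'' for the pair $\rho,\rho^{-1/\beta}$ is exactly what the paper labels ``H\"older'' --- it is really the elementary fact that $\mathrm{Cov}_{d\nu}(\rho,g(\rho))\le 0$ for decreasing $g$, and your phrasing is arguably the more accurate one.
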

\begin{proof}
Since $\alpha\le1-k\beta$, $\beta>\frac{1}{k}$, let $p=1$, we have
\begin{eqnarray*}
\partial_t\mathcal{Z}_1&=&\big(1+(\alpha-1)\big)\Big(\mathcal{Z}_{2}-\frac{\mathcal{Z}_1\mathcal{Z}_1}{|S^n|}\Big) +k(1+\beta)\Big(\int_{S^n}u\rho\sigma[\rho u]d\mu-\frac{\mathcal{Z}_1\mathcal{Z}_1}{|S^n|}\Big).
\end{eqnarray*}
Since $h_{ij}$ satisfies the Codazzi equations,
 we have $\sum_i\nabla_i\sigma^{ij}=0$ (see \cite{And94(1)},\cite{And97}), and
\begin{eqnarray*}
\int_{S^n}u\rho\sigma_k[W_{\rho u},W_u,...,W_u]d\mu
&=&\int_{S^n}u\rho\sigma[\rho u]d\mu\\
&=&\int_{S^n}u\rho\sigma^{ij}\big(\nabla_i\nabla_j(u\rho)+\delta_{ij} u\rho\big)d\mu
\\&=&\int_{S^n}u\rho\sigma^{ij}(h_{ij}\rho+2\nabla_iu\nabla_j\rho+u\nabla_i\nabla_j\rho)d\mu
\\&=&\mathcal{Z}_{2}-\int_{S^n}u^2\sigma^{ij}\nabla_i\rho \nabla_j\rho d\mu.
\end{eqnarray*}
By the Alexandrov-Fenchel inequality in Lemma \ref{AFI}, we have
\begin{eqnarray}\label{AF}
\Big(\int_{S^n}u\psi\sigma[u]d\mu\Big)^2&\ge&\int_{S^n}u\sigma[u]d\mu\int_{S^n}u\psi\sigma[u\psi]d\mu
\nonumber\\&=&|S^n|\Big(\mathcal{Z}_2-\int_{S^n}u^2\sigma^{ij}\nabla_i\psi\nabla_j\psi d\mu\Big).
\end{eqnarray}
Set $\psi=\rho$, the above inequality shows
\[\int_{S^n}u^2\sigma^{ij}\nabla_i\rho\nabla_j\rho d\mu-\mathcal{Z}_{2}+\frac{\mathcal{Z}^2_1}{|S^n|}\ge0.\]
Hence
\begin{eqnarray*}
\partial_t\mathcal{Z}_1
&=&\alpha\big(\mathcal{Z}_2-\frac{\mathcal{Z}^2_1}{|S^n|}\big)-k(1+\beta)\big(\int_{S^n}u^2\sigma^{ij}\nabla_i\rho\nabla_j\rho d\mu-\mathcal{Z}_{2}+\frac{\mathcal{Z}^2_{1}}{|S^n|}\big)
\\&\le&0,
\end{eqnarray*}
where $\alpha\le1-k\beta<0$ and the H\"{o}lder inequality shows that $\mathcal{Z}_{2}\ge\frac{\mathcal{Z}^2_{1}}{|S^n|}$.
\[\partial_t\eta=\frac{\partial_t\mathcal{Z}_1}{|S^n|}=\frac{1}{|S^n|}\Big(\alpha\big(\mathcal{Z}_2-\frac{\mathcal{Z}^2_1}{|S^n|}\big)-k(1+\beta)\big(\int_{S^n}u^2\sigma^{ij}\nabla_i\rho\nabla_j\rho d\mu-\mathcal{Z}_{2}+\frac{\mathcal{Z}^2_{1}}{|S^n|}\big)\Big)\le0.\]
Hence we obtain the uniform upper bound on $\eta(t)$.
Next we prove the uniform lower bound.
Set $\theta\le-\frac{1}{\beta}<0$, $\alpha\le1-k\beta$,  we have
\begin{eqnarray}
\partial_t\mathcal{Z}_\theta(u)\nonumber&=& \big(1+(\alpha-1)\theta\big)\Big(\mathcal{Z}_{1+\theta}-\frac{\mathcal{Z}_1\mathcal{Z}_\theta}{|S^n|}\Big) +k(1+\beta \theta)\Big(\int_{S^n}u\rho^\theta\sigma[\rho u]d\mu-\frac{\mathcal{Z}_1\mathcal{Z}_\theta}{|S^n|}\Big)
\nonumber\\&=&[1+(\alpha-1)\theta]\Big(\mathcal{Z}_{\theta+1}- \frac{\mathcal{Z}_1\mathcal{Z}_\theta}{|S^n|}\Big)
\nonumber\\&{}&+k(1+\theta\beta)\Big(\mathcal{Z}_{1+\theta}-\frac{4\theta}{(1+\theta)^2}\int_{S^n}u^2\sigma^{ij}\nabla_i(\rho^{\frac{1+\theta}{2}})\nabla_j(\rho^{\frac{1+\theta}{2}})- \frac{\mathcal{Z}_1\mathcal{Z}_\theta}{|S^n|}\Big).
\nonumber\end{eqnarray}
Set $\psi=\rho^{\frac{1+\theta}{2}}$ in the Alexandrov-Fenchel inequality \eqref{AF}, we obtain
\[\int_{S^n}u^2\sigma^{ij}\nabla_i\rho^{\frac{1+\theta}{2}}\nabla_j\rho^{\frac{1+\theta}{2}} d\mu-\mathcal{Z}_{1+\theta}+\frac{\mathcal{Z}^2_{\frac{1+\theta}{2}}}{|S^n|}\ge0.\]
Thus
\begin{eqnarray*}
\partial_t\mathcal{Z}_\theta(u)&=&[1+(\alpha-1)\theta+k(1+\theta\beta)]\Big(\mathcal{Z}_{\theta+1}- \frac{\mathcal{Z}_1\mathcal{Z}_\theta}{|S^n|}\Big)
-\frac{4\theta k(1+\theta\beta)}{(1+\theta)^2}\Big(\mathcal{Z}_{1+\theta}-\frac{\mathcal{Z}^2_{\frac{1+\theta}{2}}}{|S^n|}\Big)
\\&{}&-\frac{4\theta k(1+\theta\beta)}{(1+\theta)^2}\Big(\int_{S^n}u^2\sigma^{ij}\nabla_i\rho^{\frac{1+\theta}{2}}\nabla_j\rho^{\frac{1+\theta}{2}} d\mu-\mathcal{Z}_{1+\theta}+\frac{\mathcal{Z}^2_{\frac{1+\theta}{2}}}{|S^n|}\Big)
\\&\le&0
\end{eqnarray*}
since $\theta<-\frac{1}{\beta}$, and by the H\"{o}lder inequality, we get
$\mathcal{Z}_{1+\theta}\le\frac{\mathcal{Z}_1\mathcal{Z}_{\theta}}{|S^n|}$ and
$\mathcal{Z}_{1+\theta}\ge\frac{\mathcal{Z}^2_\frac{1+\theta}{2}}{|S^n|}$.
Hence, $\mathcal{Z}_\theta(u)\le\mathcal{Z}_\theta(u_0)$.
By the H\"{o}lder inequality again, we have
\[
|S^n|=\int_{S^n}u\sigma_kd\mu\le\big(\int_{S^n}fu^\alpha\sigma_k^{1+\beta}d\mu\big)^{\frac{-\theta}{1-\theta}}\big(\int_{S^n}u\sigma_k(fu^{\alpha-1}\sigma_k^{\beta})^\theta d\mu\big)^{\frac{1}{1-\theta}}=\mathcal{Z}^{\frac{-\theta}{1-\theta}}_1\mathcal{Z}^{\frac{1}{1-\theta}}_\theta.
\]
Therefore we get the uniform bound on $\eta(t)$.
\end{proof}

\begin{lem}\label{monotone}
The functional \eqref{functional} is non-increasing along the normalised flow \eqref{NEF2}, and the equality holds if and only if $M_t$ satisfies the elliptic equation \eqref{elliptic eq}.
\end{lem}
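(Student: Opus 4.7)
The plan is to recognize $\mathcal{J}(u)=\mathcal{Z}_{-1/\beta}(u)$ in the notation set up just before Lemma \ref{boundeta}, so that the monotonicity statement reduces to a specialization of the general formula for $\partial_t\mathcal{Z}_p(u)$ already derived above. The key observation is that at $p=-1/\beta$ the coefficient $k(1+\beta p)$ of the second group of terms vanishes identically, which kills the entire integral $\int_{S^n} u\rho^p\sigma[\rho u]\,d\mu$---the only term that would otherwise require a genuine Alexandrov--Fenchel estimate. What remains is a single difference of first-moment integrals,
\[
\partial_t\mathcal{J}(u)=\frac{1+\beta-\alpha}{\beta}\Bigl(\mathcal{Z}_{1-\frac{1}{\beta}}-\frac{\mathcal{Z}_1\mathcal{Z}_{-1/\beta}}{|S^n|}\Bigr),
\]
and the standing assumption $\alpha\le 1-k\beta$, $\beta>\frac{1}{k}$ makes $(1+\beta-\alpha)/\beta\ge k+1>0$, so the sign of $\partial_t\mathcal{J}(u)$ is the sign of the bracket.

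The next step is to show that this bracket is non-positive by Chebyshev's integral inequality. Since $M_t$ is uniformly convex and encloses the origin, $u>0$ and $\sigma_k>0$, so $d\widetilde\mu:=u\sigma_k\,d\mu$ defines a positive finite measure on $S^n$; moreover $\int_{S^n}d\widetilde\mu=|S^n|$ by the normalization of $\widetilde u$ built into \eqref{NEF2} (this fact was already used in the derivation of $\partial_t\mathcal{Z}_p$ above). In this measure $\mathcal{Z}_p=\int_{S^n}\rho^p\,d\widetilde\mu$ for every $p$, and the two integrands $\rho$ and $\rho^{-1/\beta}$ are monotone functions of the common scalar $\rho$ with opposite monotonicity (because $-1/\beta<0$). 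Hence the pointwise inequality $(\rho(x)-\rho(y))(\rho(x)^{-1/\beta}-\rho(y)^{-1/\beta})\le 0$ holds, and integrating against $d\widetilde\mu\otimes d\widetilde\mu$ yields exactly $|S^n|\,\mathcal{Z}_{1-1/\beta}\le \mathcal{Z}_1\mathcal{Z}_{-1/\beta}$. Combined with the positive prefactor above, this proves $\partial_t\mathcal{J}(u)\le 0$.

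For the equality case, Chebyshev's inequality is saturated only when $(\rho(x)-\rho(y))(\rho(x)^{-1/\beta}-\rho(y)^{-1/\beta})\equiv 0$ for $d\widetilde\mu\otimes d\widetilde\mu$-almost every $(x,y)$; since $u\sigma_k>0$ everywhere, the support of $d\widetilde\mu$ is all of $S^n$, and since $s\mapsto s^{-1/\beta}$ is strictly monotone this forces $\rho$ to be constant on $S^n$, i.e.\ $fu^{\alpha-1}\sigma_k^\beta\equiv c$, which is precisely \eqref{elliptic eq}. There is no serious technical obstacle in this argument; the whole proof hinges on spotting that the exponent $p=-1/\beta$ collapses the otherwise awkward nonlinear second term in the general formula for $\partial_t\mathcal{Z}_p$, after which both the monotonicity and the characterization of the stationary case follow from a single application of Chebyshev.
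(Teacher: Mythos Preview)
Your proof is correct and follows essentially the same route as the paper: set $p=-\frac{1}{\beta}$ in the general formula for $\partial_t\mathcal{Z}_p$, observe that the coefficient $k(1+\beta p)$ vanishes so that only the term $\frac{1-\alpha+\beta}{\beta}\bigl(\mathcal{Z}_{1-1/\beta}-\frac{\mathcal{Z}_1\mathcal{Z}_{-1/\beta}}{|S^n|}\bigr)$ survives, and then conclude non-positivity of the bracket. The only difference is terminological: the paper attributes the inequality $\mathcal{Z}_{1-1/\beta}\le \frac{\mathcal{Z}_1\mathcal{Z}_{-1/\beta}}{|S^n|}$ to ``the H\"older inequality,'' whereas you (more precisely) identify it as Chebyshev's integral inequality for the oppositely ordered functions $\rho$ and $\rho^{-1/\beta}$ with respect to the measure $u\sigma_k\,d\mu$; your treatment of the equality case is also more explicit than the paper's one-line remark.
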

\begin{proof}
Since $\alpha\le1-k\beta$, $\beta>\frac{1}{k}$, from the above calculation process, when $p=-\frac{1}{\beta}$, we have along the normalised flow \eqref{NEF2}
\begin{eqnarray*}
\partial_t\mathcal{J}(u)=\partial_t\mathcal{Z}_{-\frac{1}{\beta}}(u)&=&
\frac{1-\alpha+\beta}{\beta}\Big(\mathcal{Z}_{1-\frac{1}{\beta}}-\frac{\mathcal{Z}_1\mathcal{Z}_{-\frac{1}{\beta}}}{|S^n|}\Big)
\\&\le&0.
\end{eqnarray*}
The last inequality holds from the H\"{o}lder inequality,
and the equality holds if and only if $fu^{\alpha-1}\sigma_k^{\beta}=c$, where $c$ is a constant.
\end{proof}

\section{A priori estimates}
We first show the $C^0$-estimate of the solution to \eqref{NEF1}.
\begin{lem}\label{C0estimate1}
Let $u(.,t)$, $t\in [0,T)$, be a smooth, uniformly convex solution to  \eqref{NEF1}. If $\alpha\le 1-k\beta$ and $\beta>\frac{1}{k}$, then there is a positive constant $C$ depending only on $\alpha$, $\beta$ and the lower and upper bounds of $u(\cdot, 0)$ such that
 \[
 \frac{1}{C}\le u(\cdot, t)\le C.
 \]
\end{lem}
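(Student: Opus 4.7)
The plan is to prove the $C^0$ bound by applying the parabolic maximum principle to the spatial maximum and minimum of $u(\cdot,t)$, exploiting the sign condition $\alpha + k\beta - 1 \le 0$.

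First, at a point $x_0$ where $u(\cdot,t)$ attains its spatial maximum, one has $\nabla^2 u(x_0,t)\le 0$ as a matrix, so the curvature radii matrix satisfies
\[
b_{ij}(x_0,t)=\nabla_i\nabla_j u + u\,\delta_{ij}\le u\,\delta_{ij}.
\]
Since $\sigma_k$ is monotone on $\Gamma_k$ and $u>0$, this gives $\sigma_k(b)\le \sigma_k(u\,I)=\binom{n}{k}u^k$. Using $\gamma=\sigma_k^\beta(1,\ldots,1)=\binom{n}{k}^\beta$, we obtain $\sigma_k^\beta\le \gamma\, u^{k\beta}$ at $x_0$. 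Plugging this into \eqref{NEF1} gives
\[
\frac{d}{d\tau}u_{\max}(\tau)\le \gamma\, u_{\max}^{\alpha+k\beta}-\gamma\, u_{\max}=\gamma\, u_{\max}\bigl(u_{\max}^{\alpha+k\beta-1}-1\bigr).
\]
The analogous computation at a spatial minimum yields $b\ge uI$, whence $\sigma_k^\beta\ge \gamma u^{k\beta}$ there, and
\[
\frac{d}{d\tau}u_{\min}(\tau)\ge \gamma\, u_{\min}\bigl(u_{\min}^{\alpha+k\beta-1}-1\bigr).
\]

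Next I invoke the hypothesis $\alpha\le 1-k\beta$, i.e. $\alpha+k\beta-1\le 0$. In the borderline case $\alpha+k\beta-1=0$, both right-hand sides vanish, so $u_{\max}$ is non-increasing and $u_{\min}$ is non-decreasing, giving
\[
\min_{S^n}u_0\le u(x,\tau)\le \max_{S^n}u_0.
\]
When $\alpha+k\beta-1<0$, the map $s\mapsto s^{\alpha+k\beta-1}-1$ is negative for $s>1$ and positive for $s<1$; hence the ODE comparison shows $u_{\max}(\tau)\le \max\{1,\max_{S^n}u_0\}$ and $u_{\min}(\tau)\ge \min\{1,\min_{S^n}u_0\}$, giving the desired two-sided bound with $C$ depending only on $\alpha,\beta$ and $u_0$.

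There is no substantive obstacle here beyond the standard observation that the Lipschitz functions $\tau\mapsto u_{\max}(\tau)$ and $\tau\mapsto u_{\min}(\tau)$ may be differentiated almost everywhere in $\tau$ (or treated via Hamilton's trick), so the maximum-principle inequalities above are legitimate. The argument does not use the hypothesis $\beta>1/k$ in an essential way; that restriction enters elsewhere (e.g. Lemma \ref{boundeta} and the higher-order estimates) rather than in this pointwise comparison.
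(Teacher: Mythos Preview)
Your proof is correct and follows essentially the same approach as the paper: both apply the maximum principle at spatial extrema of $u$, using $\nabla^2 u\le 0$ (resp.\ $\ge 0$) there to bound $\sigma_k$ by $\binom{n}{k}u^k$ and derive the ODE inequalities $\frac{d}{d\tau}u_{\max}\le \gamma u_{\max}(u_{\max}^{\alpha+k\beta-1}-1)$ and $\frac{d}{d\tau}u_{\min}\ge \gamma u_{\min}(u_{\min}^{\alpha+k\beta-1}-1)$, yielding $u_{\min}\ge\min\{1,u_{\min}(0)\}$ and $u_{\max}\le\max\{1,u_{\max}(0)\}$. Your additional remarks (the explicit handling of the borderline case $\alpha+k\beta-1=0$ and the observation that $\beta>1/k$ is not used here) are correct refinements but do not alter the argument.
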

\begin{proof}
Let $u_{\min}(t)=\min_{x\in S^n}u(\cdot, t)=u(x_t,t)$. For fixed time $t$, at the point $x_t$, we have
$$\nabla_iu=0\, \, \,   {\rm{and}}\, \, \,  \nabla_{ij}u\ge0,$$
then
$$\frac{du_{\min}}{dt}\ge\gamma u_{\min}(u_{\min}^{\alpha+k\beta-1}-1).$$
Hence
$u_{\min}\ge \min\{1,{u_{\min}(0)}\}$.
Similarly, we have $u_{\max}\le \max\{1, {u_{\max}(0)}\}$.
\end{proof}

\begin{lem}\label{bound usigmak2}
Let $n\ge2$, $1\le k\le n$, $\alpha\le1-k\beta$, $\beta>\frac{1}{k}$, and $X(.,t)$ be the solution to the normalised flow \eqref{NEF2} which encloses the origin for $t\in[0,T)$. Then there is a positive constant $C$ depending on the initial hypersurface and $f$, $\alpha, \beta$, such that
\[C^{-1}\le u^{\alpha-1}\sigma_k^\beta\le C .\]
\end{lem}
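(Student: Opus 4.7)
The plan is to obtain the two-sided bound directly on $\rho := fu^{\alpha-1}\sigma_k^{\beta}$ by a parabolic maximum principle. Since $f$ is a smooth positive function on the compact sphere $S^n$ it is pinched between two positive constants, so the desired bound on $u^{\alpha-1}\sigma_k^{\beta}$ is equivalent to a bound on $\rho$. In this notation the normalised flow \eqref{NEF2} takes the clean form $\partial_t u = u(\rho - \eta)$, and the crucial input from the previous section is Lemma~\ref{boundeta}, which supplies uniform constants $0 < \eta_0 \le \eta(t) \le \eta_1 < \infty$.

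The first step is to compute the evolution of $\log\rho$. Writing $b_{ij} = \nabla_{ij}u + u\delta_{ij}$, one has $\partial_t b_{ij} = \nabla_{ij}(u(\rho-\eta)) + u(\rho-\eta)\delta_{ij}$. Expanding the product rule for $\nabla_{ij}(u(\rho-\eta))$, invoking the homogeneity identity $\sigma_k^{ij}b_{ij} = k\sigma_k$, and combining with $\partial_t\log u = \rho - \eta$, I expect to arrive at
\begin{equation*}
\partial_t \log \rho \;=\; (\alpha - 1 + k\beta)(\rho - \eta) \;+\; \frac{2\beta}{\sigma_k}\sigma_k^{ij}\nabla_i u\,\nabla_j\rho \;+\; \frac{\beta u}{\sigma_k}\sigma_k^{ij}\nabla_{ij}\rho .
\end{equation*}
At a spatial maximum of $\rho$ the gradient term vanishes and $\sigma_k^{ij}\nabla_{ij}\rho \le 0$ because $(\sigma_k^{ij})$ is positive definite on $\Gamma_k$. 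Since $\alpha - 1 + k\beta \le 0$ by hypothesis, whenever $\rho_{\max}(t) \ge \eta(t)$ the right-hand side is non-positive, so $\rho_{\max}$ is non-increasing; combined with $\eta \le \eta_1$ this yields $\rho_{\max}(t) \le \max\{\rho_{\max}(0),\eta_1\}$. The symmetric argument at a spatial minimum, where $\sigma_k^{ij}\nabla_{ij}\rho \ge 0$ and $(\alpha-1+k\beta)(\rho-\eta) \ge 0$ as soon as $\rho_{\min} \le \eta$, gives $\rho_{\min}(t) \ge \min\{\rho_{\min}(0),\eta_0\}>0$.

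The main obstacle is really bookkeeping: the sign of $\alpha - 1 + k\beta$, the sign of $\rho - \eta$ at each extremum, and the sign of the Hessian term $\sigma_k^{ij}\nabla_{ij}\rho$ all need to cooperate, and the borderline hypothesis $\alpha \le 1-k\beta$ is precisely what makes both the upper and lower bound arguments close simultaneously. In the equality case $\alpha = 1-k\beta$ the leading coefficient vanishes outright and $\partial_t \log \rho_{\max} \le 0$ unconditionally, which is even stronger. It is worth noting that the argument uses neither a separate $C^0$ bound on $u$ nor any higher regularity input; the two-sided pinching of $\eta(t)$ from Lemma~\ref{boundeta} is what carries the whole proof.
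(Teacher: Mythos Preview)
Your proposal is correct and follows essentially the same route as the paper. The paper works with $Q = fu^{\alpha-1}\sigma_k^{\beta}$ (your $\rho$) and computes $\partial_t Q$, arriving at
\[
\partial_t Q = (\alpha+k\beta-1)Q(Q-\eta) + \beta f u^{\alpha}\sigma_k^{\beta-1}\sigma_k^{ij}Q_{ij} + 2\beta f u^{\alpha-1}\sigma_k^{\beta-1}\sigma_k^{ij}Q_i u_j,
\]
which is exactly your evolution equation for $\log\rho$ after dividing by $Q$; the maximum principle argument and the role of the sign $\alpha-1+k\beta\le 0$ are identical. Your remark that no $C^0$ bound on $u$ is needed (only the two-sided bound on $\eta$ from Lemma~\ref{boundeta}) is accurate and in fact slightly sharper than the dependency the paper records at the end of its proof.
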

\begin{proof}
Consider the auxiliary function
\[Q=fu^{\alpha-1}\sigma_k^\beta.
\]
Since \[(fu^\alpha\sigma_k^\beta)_{ij}=Q_{ij}u+Q_iu_j+Q_ju_i+Qu_{ij},
\]
we get
\begin{eqnarray}\label{estimate1}
\nonumber\partial_tQ&=&(\alpha-1)f^2u^{2\alpha-2}\sigma_k^{2\beta}-\eta(\alpha-1+k\beta)fu^{\alpha-1}\sigma_k^\beta
\\
\nonumber&{}&+\beta f^2u^{2\alpha-1}\sigma_k^{2\beta-1}\sum_i^n\sigma_k^{ii}+\beta fu^{\alpha-1}u^{\alpha-2}\sigma_k^{\beta-1}\sigma_k^{ij}(fu^{\alpha}\sigma_k^\beta)_{ij}
\\
&=&(\alpha+k\beta-1)Q^2-\eta(\alpha+k\beta-1)Q+\beta fu^{\alpha}\sigma_k^{\beta-1}\sigma_k^{ij}Q_{ij}+2\beta fu^{\alpha-1}\sigma_k^{\beta-1}\sigma_k^{ij}Q_iu_j.
\end{eqnarray}
If $\alpha+k\beta-1\le0$, the sign of the coefficient of the highest order term $Q^2$ is negative. The sign of the coefficient of the lower order term $Q$ is positive. So it is easy to see $C^{-1}\le Q \le C$, where $C$ is the positive constant depending on  $f$, $\alpha, \beta, \min_{S^n\times [0,T)}u$ and $\max_{S^n\times [0,T)}u$.
\end{proof}
When $f=1$, by use of \eqref{estimate1}, we can get $C^{-1}\le Q \le C$ for the flow \eqref{NEF1}. Then by Lemma \ref{C0estimate1}, we have $C^{-1}\le \sigma_k \le C$. That is,
\begin{cor}\label{boundsigmak1}
Let $n\ge2$, $1\le k\le n$, $\alpha\le1-k\beta$, $\beta>\frac{1}{k}$ and $X(.,t)$ be the solution to the normalised flow \eqref{NEF1} which encloses the origin for $t\in[0,T)$. Then there is a positive constant $C$ depending on the initial hypersurface and $\alpha, \beta$, such that
\[C^{-1}\le\sigma_k\le C .\]
\end{cor}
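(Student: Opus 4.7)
The plan is to specialise the argument of Lemma \ref{bound usigmak2} to the case $f\equiv 1$ and to the normalised flow \eqref{NEF1}. Set $Q = u^{\alpha-1}\sigma_k^\beta$. Repeating the computation that produced \eqref{estimate1}, but now using $\partial_t u = u^\alpha\sigma_k^\beta - \gamma u$ in place of $\partial_t u = fu^\alpha\sigma_k^\beta - u\,\eta(t)$, yields an evolution equation of exactly the same form, with the only change being that the non-local factor $\eta(t)$ is replaced throughout by the fixed constant $\gamma = \sigma_k^\beta(1,\ldots,1)$:
\[
\partial_t Q = (\alpha+k\beta-1)\,Q^2 \;-\; \gamma(\alpha+k\beta-1)\,Q \;+\; \beta u^{\alpha}\sigma_k^{\beta-1}\sigma_k^{ij}Q_{ij} \;+\; 2\beta u^{\alpha-1}\sigma_k^{\beta-1}\sigma_k^{ij}Q_i u_j.
\]

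Next I would apply the parabolic maximum principle to $Q$. The hypothesis $\alpha\le 1-k\beta$ gives $\alpha+k\beta-1\le 0$, so the coefficient of $Q^2$ is non-positive and the coefficient of the lower-order term $Q$ is non-negative. Because $M_t$ is uniformly convex, $\sigma_k^{ij}$ is positive definite. At a spatial maximum of $Q(\cdot,t)$ the gradient term drops out and $\sigma_k^{ij}Q_{ij}\le 0$, so one concludes
\[
\tfrac{d}{dt}Q_{\max}(t) \;\le\; -(1-\alpha-k\beta)\,Q_{\max}(t)\bigl(Q_{\max}(t)-\gamma\bigr),
\]
whence $Q_{\max}(t) \le \max\{\gamma,\, Q_{\max}(0)\}$. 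The symmetric inequality at the spatial minimum gives $Q_{\min}(t) \ge \min\{\gamma,\, Q_{\min}(0)\}$. This produces $C^{-1}\le Q \le C$ with $C$ depending only on $\alpha,\beta$ and the initial data.

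To finish, I combine the bounds on $Q$ with the $C^0$-bound $C^{-1}\le u \le C$ supplied by Lemma \ref{C0estimate1}, and solve $\sigma_k = (Q\,u^{1-\alpha})^{1/\beta}$ using $\beta>0$ to read off $C^{-1}\le\sigma_k\le C$. I do not anticipate a genuine obstacle here: the corollary is essentially a transcription of Lemma \ref{bound usigmak2} into the simpler local setting of \eqref{NEF1}, where $\gamma$ is a constant rather than an evolving quantity. The only point that requires any care is ensuring that the ellipticity of $\sigma_k^{ij}$ is in force along the entire flow, which is secured by the assumed (and preserved) uniform convexity of $M_t$.
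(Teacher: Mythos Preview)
Your proposal is correct and follows essentially the same route as the paper: the paper states only that one uses \eqref{estimate1} with $f\equiv 1$ (so $\eta$ becomes the constant $\gamma$) to obtain $C^{-1}\le Q\le C$ along \eqref{NEF1}, and then combines this with Lemma~\ref{C0estimate1} to deduce the $\sigma_k$ bounds. You have simply written out the maximum-principle and algebraic steps that the paper leaves implicit.
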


\begin{lem}\label{C1estimate2}
Let $n\ge2$, $1\le k\le n$, $\alpha\le 1-k\beta$, $\beta>\frac{1}{k}$ and $X(.,t)$ be the solution to the normalised flow \eqref{NEF2} which encloses the origin for $t\in[0,T)$. Then there is a positive constant $C$ depending on the initial hypersurface and $f$, $\alpha, \beta$, such that $|\nabla\log u|\le C .$
\end{lem}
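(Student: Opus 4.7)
The plan is to apply the parabolic maximum principle to the auxiliary function
\[
W := \frac{|\nabla u|^{2}}{u^{2}},
\]
whose uniform upper bound is equivalent to the desired estimate on $|\nabla\log u|$.

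Writing $\Phi := fu^{\alpha}\sigma_{k}^{\beta}$, the flow \eqref{NEF2} reads $\partial_{t}u = \Phi - \eta u$. Since $\eta(t)$ is spatially constant, a direct computation gives
\[
\partial_{t}W \;=\; 2u^{\alpha-2}\sigma_{k}^{\beta}\,f_{i}u_{i} \;+\; \frac{2(\alpha-1)\,\Phi\,W}{u} \;+\; \frac{2\beta\,\Phi\,u_{i}(\sigma_{k})_{i}}{u^{2}\,\sigma_{k}}.
\]
The middle term is strictly negative and of size $-c\,W$, since $\alpha\le 1-k\beta<1$ and $\Phi/u = fu^{\alpha-1}\sigma_{k}^{\beta}$ is bounded from below by Lemma \ref{bound usigmak2} (combined with the a priori $C^{0}$ bound on $u$); the first term is $O(\sqrt{W})$ by the boundedness of $f$ and $\nabla f$.

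Next I would apply the elliptic linearisation
\[
\mathcal{L} := \partial_{t} - \beta f u^{\alpha}\sigma_{k}^{\beta-1}\sigma_{k}^{ij}\nabla_{i}\nabla_{j}
\]
to $W$. At a first space-time maximum $(x_{0},t_{0})$ one has $\mathcal{L}(W)\ge 0$, and the condition $\nabla W(x_{0},t_{0})=0$ produces the critical-point identity
\[
u\,u_{ij}u_{j} \;=\; |\nabla u|^{2}\,u_{i}, \qquad i=1,\dots,n.
\]
Computing $\sigma_{k}^{ij}\nabla_{i}\nabla_{j}W$ yields a non-negative ``good'' term $2\sigma_{k}^{ij}u_{ki}u_{kj}/u^{2}$ together with cubic-in-derivative terms of the form $\sigma_{k}^{ij}u_{k}u_{kij}/u^{2}$. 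Using the Codazzi equation $\nabla_{i}h_{jk}=\nabla_{j}h_{ik}$ together with the Ricci identity on $S^{n}$ to commute covariant derivatives, these cubic contributions match, up to curvature corrections controlled by $|\nabla u|^{2}\sum_{i}\sigma_{k}^{ii}$, the term $2\beta\Phi u_{i}(\sigma_{k})_{i}/(u^{2}\sigma_{k})$ appearing in $\partial_{t}W$, and cancel against it. The critical-point identity is then used to simplify the remaining mixed terms, while the good quadratic absorbs the $\nabla f$ error through Cauchy--Schwarz. What remains is an inequality of the form
\[
0 \;\le\; -c_{1}W + c_{2}\sqrt{W} + c_{3}
\]
at $(x_{0},t_{0})$, with $c_{1},c_{2},c_{3}>0$ depending only on $f$, $\alpha$, $\beta$ and on the a priori $C^{0}$ bounds for $u$ and Lemma \ref{bound usigmak2}. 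This forces $W(x_{0},t_{0})\le C$, and hence $|\nabla\log u|\le C$ globally.

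The main obstacle is the careful bookkeeping and cancellation of the third-order derivatives of $u$ arising in both $\sigma_{k}^{ij}\nabla_{i}\nabla_{j}W$ and $u_{i}(\sigma_{k})_{i}$: rather than estimating them individually, they must be organised through Codazzi together with the critical-point identity so that the cubic terms cancel, leaving only the controlled negative term and $\nabla f$-type perturbations. The latter, present because $f$ is not constant, are handled by Young's inequality; note that the structural hypothesis on $f$ from Theorem \ref{main2} plays no role at this step.
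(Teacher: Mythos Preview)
Your overall strategy coincides with the paper's: both work with $W=|\nabla\log u|^{2}$ (the paper phrases it as the first zero of $|\nabla u|^{2}-Au^{2}$), apply the maximum principle, and use Codazzi/Ricci to cancel the cubic terms. The computation you sketch does go through, but two points in your accounting are off.

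First, the ``strictly negative'' term is not simply $2(\alpha-1)\Phi W/u$. When you subtract $\beta f u^{\alpha}\sigma_{k}^{\beta-1}\sigma_{k}^{ij}\nabla_{ij}W$, the piece $-2Wu_{ij}/u$ inside $\nabla_{ij}W$ produces, via $u_{ij}=h_{ij}-u\delta_{ij}$ and $\sigma_{k}^{ij}h_{ij}=k\sigma_{k}$, a contribution $+2k\beta\,\Phi W/u$. The net coefficient of $\Phi W/u$ in $\mathcal{L}W$ is therefore $2(\alpha+k\beta-1)$, not $2(\alpha-1)$. This matches the paper's final inequality
\[
0\le A(\alpha+k\beta-1)+\sqrt{A}\,\tfrac{|\nabla f|}{f}+2k\beta-(2A+1)\beta\tfrac{\sigma_{k}^{11}}{\sigma_{k}}-A\beta\tfrac{\sum\sigma_{k}^{ii}}{\sigma_{k}},
\]
where $A=W$ and $\sigma_{k}=\sigma_{k}(a_{ij})$ with $a_{ij}=w_{ij}+w_{i}w_{j}+\delta_{ij}$.

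Second, and more seriously, in the borderline case $\alpha=1-k\beta$ the net coefficient $2(\alpha+k\beta-1)$ vanishes, so your inequality $0\le -c_{1}W+c_{2}\sqrt{W}+c_{3}$ does not follow from that term. The paper closes this case by retaining the curvature correction $-A\beta\sum\sigma_{k}^{ii}/\sigma_{k}$, using Newton--MacLaurin $\sum\sigma_{k}^{ii}/\sigma_{k}\ge C\sigma_{k}(a)^{-1/k}$, and observing that precisely when $\alpha=1-k\beta$ one has $u^{\alpha-1}\sigma_{k}(h)^{\beta}=\sigma_{k}(a)^{\beta}$, so Lemma~\ref{bound usigmak2} bounds $\sigma_{k}(a)$ \emph{without} any $C^{0}$ control on $u$. (Your appeal to the $C^{0}$ bound on $u$ is circular here, since Lemma~\ref{C0estimate2 boundsigma2} is proved \emph{using} the present lemma.) Once you incorporate this extra negative term, your argument and the paper's are the same.
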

\begin{proof}
Let $w=\log u$. Then we have
\[
h_{ij}=u_{ij}+u\delta_{ij}=u(w_{ij}+w_iw_j+\delta_{ij})
\]
Assume the auxiliary function
 $|\nabla u|^2-Au^2<0$, for a positive constant $A>0$ along the flow. Otherwise there is a point $(x_{t_0}, t_0)$ where $t_0$is the first time, such that $|\nabla u|^2-Au^2=0$, $A>0$ is a constant to be decided later. Hence at the point $(x_{t_0},t_0)$, $\nabla_i|\nabla w|^2=0$.
 Choosing an orthonormal frame and rotating the the coordinate, such that $w_1=|\nabla w|$, $w_i=0$ for $i=2,\cdots,n$, and $(w_{ij})$ is diagonal. Then we get
\[
(a_{ij}):=(w_{ij}+w_iw_j+\delta_{ij})={\text{diag}} (1+w_1^2,1+w_{22},\cdots,1+w_{nn}),
\]
and
\begin{eqnarray*}
 0&\le&\partial_t(|\nabla u|^2-Au^2)=2u_i(fu^\alpha)_i\sigma_k^\beta+2\beta fu^\alpha\sigma_k^{\beta-1}\sigma_k^{mn}\nabla_ih_{mn}u_i-2Afu^{\alpha+1}\sigma_k^\beta
 \\&\le&2u_i(fu^\alpha)_i\sigma_k^\beta+4k\beta fu^{\alpha+1}\sigma_k^\beta-2\beta fu^{\alpha+2}\sigma_k^{\beta-1}\sum_i^n\sigma_k^{ii}-2\beta fu^\alpha\sigma_k^{\beta-1}\sigma_k^{ij}h_{li}h_{lj}
 \\&{}&+2A(k\beta-1)fu^{\alpha+1}\sigma_k^\beta-2A\beta fu^{\alpha+2}\sigma_k^{\beta-1}\sum_i^n\sigma_k^{ii}+2A\beta fu^{\alpha}\sigma_k^{\beta-1}\sigma_k^{ij}u_iu_j.
\end{eqnarray*}
Substituting $u_i=uw_i$ and  $w_1^2=A$ into the above inequality, and denote $\sigma_k=\sigma_k(a_{ij})$, we have
\begin{eqnarray*}
0\le A(\alpha+k\beta-1)+\sqrt{A}\frac{|\nabla f|}{f}+2k\beta-(A+1)^2\beta\frac{\sigma_k^{11}}{\sigma_k}-A\beta\frac{\sum_i^n\sigma_k^{ii}}{\sigma_k}+A^2\beta\frac{\sigma_k^{11}}{\sigma_k}
\end{eqnarray*}
Hence we have\\
Case 1. \qquad$\alpha\le1-k\beta$, then $A(1-k\beta-\alpha)\le\sqrt{A}\frac{|\nabla f|}{f}+2k\beta$.\\
Case 2. \qquad$\alpha=1-k\beta$, then $AC_0 \sigma_k^{-\frac{1}{k}}\le A\beta\frac{\sum_i^n\sigma_k^{ii}}{\sigma_k}\le\sqrt{A}\frac{|\nabla f|}{f}+2k\beta$.\\
Since $\frac{\sum_i^n\sigma_k^{ii}}{\sigma_k}\ge C(n,k)\sigma_k^{-\frac{1}{k}}$ by the classic Newton-MacLaurin inequality \cite{GL00}, and the fact that $\sigma_k(a_{ij})$ is bounded by Lemma \ref{bound usigmak2} for the Case 2.
Let $A$ be large enough we then get a contradiction. This completes the proof.
\end{proof}

 When $f=1$, by use of the same argument in Lemma \ref{C1estimate2} and the result of Lemma \ref{C0estimate1}. We have
\begin{cor}\label{C1estimate1}
 Let $n\ge2$, $1\le k\le n$, $\alpha\le 1-k\beta$, $\beta>\frac{1}{k}$ and $X(.,t)$ be the solution to the normalised flow \eqref{NEF1} which encloses the origin for $t\in[0,T)$. Then there is a positive constant $C$ depending on the initial hypersurface and $\alpha, \beta$, such that $|\nabla u|\le C .$
\end{cor}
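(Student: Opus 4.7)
The proof parallels Lemma \ref{C1estimate2} line by line; the only structural differences between \eqref{NEF1} and \eqref{NEF2} are that $f\equiv 1$ and the pointwise rescaling factor $\eta(t)$ is replaced by the constant $\gamma=\sigma_k^\beta(1,\ldots,1)$. Neither change affects the maximum-principle argument, so the strategy is to repeat the computation of Lemma \ref{C1estimate2}, obtain $|\nabla u|^2 \le Au^2$ uniformly, and then invoke the $C^0$ bound of Lemma \ref{C0estimate1} to conclude $|\nabla u|\le C$.

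Concretely, I would introduce the auxiliary function $G = |\nabla u|^2 - Au^2$ for a large constant $A$ to be chosen, and suppose for contradiction that $G$ first reaches $0$ at some $(x_0,t_0)$ with $t_0>0$. At this point I choose an orthonormal frame so that $b_{ij}=\nabla_i\nabla_j u+u\delta_{ij}$ is diagonal and $u_1=|\nabla u|$. Using the evolution equation $\partial_\tau u = u^\alpha\sigma_k^\beta-\gamma u$, the Codazzi identity $\sum_i\nabla_i\sigma^{ij}=0$, and the relation $|\nabla u|^2=Au^2$ at the contact point (which makes the linear contribution $-2\gamma|\nabla u|^2+2A\gamma u^2$ cancel), the conditions $\nabla G=0$ and $\partial_\tau G\ge 0$ yield the inequality
\begin{equation*}
0 \le A(\alpha+k\beta-1) + 2k\beta - (A+1)^2\beta\frac{\sigma_k^{11}}{\sigma_k} - A\beta\frac{\sum_i\sigma_k^{ii}}{\sigma_k} + A^2\beta\frac{\sigma_k^{11}}{\sigma_k},
\end{equation*}
which is precisely the estimate displayed in Lemma \ref{C1estimate2} with the $|\nabla f|/f$ term suppressed.

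I would then split into the two cases of Lemma \ref{C1estimate2}. When $\alpha<1-k\beta$ the inequality forces $A(1-k\beta-\alpha)\le 2k\beta$, which fails for $A$ sufficiently large. When $\alpha=1-k\beta$ the linear term in $A$ disappears, and I would use the Newton-Maclaurin inequality $\sum_i\sigma_k^{ii}/\sigma_k\ge C(n,k)\sigma_k^{-1/k}$ together with the uniform upper bound $\sigma_k\le C$ furnished by Corollary \ref{boundsigmak1} to obtain $AC_0\le 2k\beta$, again impossible for $A$ large. The resulting bound $|\nabla u|^2\le Au^2$ combined with the uniform upper bound on $u$ from Lemma \ref{C0estimate1} gives $|\nabla u|\le C$. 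There is essentially no real obstacle here: the only subtlety is that in the borderline case $\alpha=1-k\beta$ the gradient estimate depends on the curvature upper bound of Corollary \ref{boundsigmak1}, which is exactly why that corollary had to be established first.
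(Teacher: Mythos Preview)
Your proposal is correct and matches the paper's approach exactly: the paper simply says ``by use of the same argument in Lemma \ref{C1estimate2} and the result of Lemma \ref{C0estimate1}'', and you have spelled out precisely that argument, observing that the $|\nabla f|/f$ term drops out and that the $\gamma$-contributions cancel at the contact point. The only cosmetic difference is that for Case 2 you invoke Corollary \ref{boundsigmak1} (plus Lemma \ref{C0estimate1}) for the $\sigma_k$ upper bound, whereas the paper's Lemma \ref{C1estimate2} cites Lemma \ref{bound usigmak2} directly; since Corollary \ref{boundsigmak1} is exactly the $f\equiv 1$ specialization of that lemma combined with the $C^0$ bound, the two are equivalent.
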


\begin{lem}\label{C0estimate2 boundsigma2}
Let $u(.,t)$, $t\in [0,T)$, be a smooth, uniformly convex solution to \eqref{NEF2}. If $1\le k\le n$, $\alpha\le 1-k\beta$, $\beta>\frac{1}{k}$, then there is a positive constant $C_1$ and $C_2$ depending only on $f$, $\alpha$, $\beta$ and the lower and upper bounds of $u(\cdot, 0)$ such that
 \[
 \frac{1}{C_1}\le u(\cdot, t)\le C_1,\qquad \frac{1}{C_2}\le\sigma_k\le C_2.
 \]
\end{lem}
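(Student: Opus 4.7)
The plan is to first establish the $C^0$ bound on $u$, after which the bound on $\sigma_k$ follows immediately from Lemma~\ref{bound usigmak2}. The $C^0$ bound on $u$ will come from combining the gradient estimate of Lemma~\ref{C1estimate2} with the volume normalisation $\int_{S^n}u\sigma_k\,d\mu=|S^n|$ that is intrinsic to the flow \eqref{NEF2}. A quick computation (in fact $\partial_t\mathcal{Z}_0=(1+k)\bigl(\mathcal{Z}_1-\mathcal{Z}_1\mathcal{Z}_0/|S^n|\bigr)$, which vanishes once $\mathcal{Z}_0=|S^n|$) shows this identity is preserved in $t$, so we may treat it as a pointwise-in-time constraint.

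First, Lemma~\ref{C1estimate2} gives $|\nabla\log u|\le C$ uniformly in $t$. Integrating $\nabla\log u$ along a minimising geodesic on $S^n$ joining a minimum point of $u(\cdot,t)$ to a maximum point (such a geodesic has length at most $\pi$) produces the oscillation bound
\[u_{\max}(t)\le e^{C\pi}\,u_{\min}(t)=:M\,u_{\min}(t).\]
Second, I convert the volume normalisation into one-sided pointwise bounds. The convex body $\Omega_t$ with support function $u(\cdot,t)$ satisfies $B_{u_{\min}(t)}(0)\subset\Omega_t\subset B_{u_{\max}(t)}(0)$, so by the monotonicity of mixed volumes under inclusion (see \cite{Sch93}) together with $V_{k+1}(1,\dots,1)=\binom{n}{k}|S^n|$, one has
\[u_{\min}(t)^{k+1}\binom{n}{k}|S^n|\;\le\;V_{k+1}(u,\dots,u)=|S^n|\;\le\;u_{\max}(t)^{k+1}\binom{n}{k}|S^n|.\]
Hence $u_{\max}(t)\ge c_0:=\binom{n}{k}^{-1/(k+1)}$ and $u_{\min}(t)\le c_0$. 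Combined with the oscillation bound this yields $c_0/M\le u_{\min}\le u\le u_{\max}\le Mc_0$, which is the required $C^0$ bound on $u$.

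Finally, with $u$ uniformly bounded above and below by positive constants, Lemma~\ref{bound usigmak2} applies and yields $C^{-1}\le fu^{\alpha-1}\sigma_k^\beta\le C$. Since $f$ is a fixed smooth positive function and $u$ is now bounded, $\sigma_k$ is uniformly bounded above and below, as claimed. I do not foresee a serious difficulty: the gradient estimate is already in place, the volume normalisation is intrinsic to \eqref{NEF2}, and the monotonicity of mixed volumes is classical, so the closing step is a straightforward combination. The only subtle point worth flagging is that the proof of Lemma~\ref{C1estimate2} in the borderline case $\alpha=1-k\beta$ itself invokes the $\sigma_k$ bound of Lemma~\ref{bound usigmak2}; the paper's logical ordering handles this, so here we simply treat Lemma~\ref{C1estimate2} as given.
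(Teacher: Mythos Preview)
Your argument is correct and follows the same overall architecture as the paper: obtain one-sided bounds on $u_{\max}$ and $u_{\min}$ from the volume normalisation $\int_{S^n}u\sigma_k\,d\mu=|S^n|$, close the $C^0$ estimate via the oscillation bound coming from Lemma~\ref{C1estimate2}, and then read off the $\sigma_k$ bound from Lemma~\ref{bound usigmak2}. The one genuine difference is in how the one-sided bounds are extracted. The paper uses the PDE estimate of Lemma~\ref{bound usigmak2} already at this stage: from $C^{-1}\le u^{\alpha-1}\sigma_k^\beta\le C$ it writes $u\sigma_k\asymp u^{(1-\alpha+\beta)/\beta}$ and integrates to get $C^{-1/\beta}u_{\min}^{(1-\alpha+\beta)/\beta}\le 1\le C^{1/\beta}u_{\max}^{(1-\alpha+\beta)/\beta}$. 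You instead invoke the monotonicity of mixed volumes under inclusion, comparing $\Omega_t$ with the balls $B_{u_{\min}}$ and $B_{u_{\max}}$, which is a purely convex-geometric step independent of the flow. Your route is slightly more elementary for the $C^0$ part (it does not need Lemma~\ref{bound usigmak2} until the very end), while the paper's route keeps everything within the analytic estimates already set up; both arrive at the same place with the same dependence of constants.
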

\begin{proof}
Since for the normalised flow \eqref{NEF2}, $\int_{S^n}u\sigma_kd\mu=|S^n|$ is constant. From Lemma \ref{bound usigmak2}, there is a positive constant $C$, such that $C^{-1}\le u^{\alpha-1}\sigma_k^{\beta}\le C$. Hence we have
\[C^{-\frac{1}{\beta}}u_{\min}^{\frac{1-\alpha+\beta}{\beta}}(t)\le\frac{\int_{S^n}u\sigma_kd\mu}{|S^n|}\le C^{\frac{1}{\beta}} u_{\max}^{\frac{1-\alpha+\beta}{\beta}}(t)\]
Hence we obtain the uniform lower and upper bounds on $u$ from Lemma \ref{C1estimate2}. Then by Lemma \ref{bound usigmak2}, we get the uniform lower and upper bounds on $\sigma_k$.
\end{proof}
Now we are going to estimate the upper and lower bounds of the principle curvature radii of the hypersurface $M_t$.
We rewrite the equation \eqref{NEF2} in the following form
$$\frac{\partial{u}}{\partial{t}}=fu^{{\alpha}}{\sigma}_{k}^\beta(u)-\eta u=\Phi L-\eta u,$$
where $L=F^{\beta k}$, $F=\sigma_k^{\frac{1}{k}}$, $\Phi=fu^\alpha$, and $h_{ij}=u_{ij}+u\delta_{ij}$.
\begin{lem}\label{boundsprincipleradii2}
Let $1\le k< n$, $\alpha, \beta\in R^1$, $\beta>\frac{1}{k}$ and $\alpha\le1-k\beta$, $X(., t)$ be the solution to the normalised flow \eqref{NEF2} for $t\in[0,T)$, which encloses the origin. Assume $f$ is a smooth positive function on $S^n$ and $(\nabla_i\nabla_jf^{\frac{1}{1+k\beta-\alpha}}+\delta_{ij}f^{\frac{1}{1+k\beta-\alpha}})$ is positive definite. Then there is a constant $C$ depending only on $f$, $\alpha, \beta, \min_{S^n\times[0,T)}u$ and $\max_{S^n\times [0,T)}u$, such that the principal curvature radii of $X(\cdot, t)$ are bounded from above and below
$$\frac{1}{C}\le\lambda_i(.,t)\le C$$ for all $t\in[0,T)$ and $i=1,...,n$.
\end{lem}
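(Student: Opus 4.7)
The plan is to bound $\lambda_{\max}$ from above and $\lambda_{\min}$ from below by two separate maximum-principle arguments, both driven by the convexity hypothesis on $\vphi := f^{1/(1+k\beta-\alpha)}$. The hypothesis says that $W_\vphi = \nabla^2\vphi + \vphi I$ is positive definite on $S^n$, i.e.\ $\vphi$ is itself the support function of a smooth strictly convex body; this will be the only device strong enough to absorb the $\nabla^2 f$ term in the evolution.

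For the upper bound on the principal radii, I would study an auxiliary function of the form
$$W(x,t) = \log\lambda_{\max}\bigl(W_u(x,t)\bigr) - a\log u + b\,\vphi,$$
with constants $a,b>0$ to be chosen. At a space-time maximum $(x_0,t_0)$, rotate the frame so that $(b_{ij}) = W_u$ is diagonal at $x_0$ and $b_{11}=\lambda_{\max}$, which reduces the problem to analysing $\log b_{11}$. Starting from
$$\partial_t b_{ij} = \nabla_i\nabla_j(\Phi L) + \Phi L\,\delta_{ij} - \eta\,b_{ij},\qquad \Phi = fu^\alpha,\ L = \sigma_k^\beta,$$
compute $\partial_t W - L^{pq}\nabla_p\nabla_q W$, use $\nabla W=0$ and $\nabla^2 W\le 0$, and write $L=F^{k\beta}$ with $F=\sigma_k^{1/k}$, so that $L^{pq}=\beta\sigma_k^{\beta-1}\sigma_k^{pq}$. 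The concavity of $F$ on $\Gamma_k$ (and the inverse-concavity inequalities it implies for $\sigma_k^{1/k}$ in $(b_{ij})$) handles the second-order commutator terms $\sigma_k^{pq,rs}\nabla b\nabla b$; the term $\sigma_k^{pq}\nabla_p\nabla_q \Phi$ produces a contribution involving $\nabla^2 f$, which via $f=\vphi^{1+k\beta-\alpha}$ is dominated pointwise by a combination of $f$ and $W_\vphi > 0$, so that choosing $b$ large absorbs it into the $b\,\vphi$ term. The sign condition $\alpha\le 1-k\beta$ (combined with the lower bound on $\sigma_k$ from Lemma \ref{C0estimate2 boundsigma2} and the $C^0$-bounds on $u$) makes the remaining zero-order terms favourable and forces $b_{11}(x_0,t_0)\le C$, giving a uniform upper bound on $\lambda_i$.

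For the lower bound on $\lambda_{\min}$, equivalently an upper bound on the largest principal curvature $\kappa_{\max}=1/\lambda_{\min}$, I would run a dual maximum-principle argument on
$$\widetilde W(x,t) = \log\kappa_{\max} + \widetilde a\log u - \widetilde b\,\vphi,$$
using the evolution of the inverse second fundamental form. The upper bound on $\lambda_{\max}$ just obtained, together with the uniform two-sided bound on $\sigma_k$ from Lemma \ref{C0estimate2 boundsigma2} and on $u$ from the same lemma, controls the zero-order terms; the $\nabla^2 f$ contribution is again absorbed using $W_\vphi>0$ with $\widetilde b$ large. A contradiction at a putative space-time maximum delivers the pointwise upper bound on $\kappa_{\max}$, hence the lower bound $\lambda_i\ge C^{-1}$.

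The main obstacle is the careful balancing of the weights $a,b,\widetilde a,\widetilde b$ in these auxiliary functions, in particular the handling of $\nabla^2 f$, which is precisely the reason for the structural hypothesis on $\vphi$. A further technical difficulty is that $\sigma_k^\beta$ with $\beta>1/k$ is neither concave nor convex in $W_u$, so the standard concavity-based simplifications must be replaced by systematically working through the intermediate quantity $F=\sigma_k^{1/k}$ and the identity $L=F^{k\beta}$. The lower bound on $\lambda_{\min}$ is the most delicate point in the range $1\le k<n$, since — in contrast to $k=n$, where $\sigma_n=\prod\lambda_i$ would give it for free from the upper bound on $\lambda_{\max}$ and the positive lower bound on $\sigma_n$ — here no algebraic shortcut is available and one truly needs the parabolic maximum principle applied to $\kappa_{\max}$.
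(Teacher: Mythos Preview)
Your plan diverges from the paper's in two ways, and one of them contains a genuine gap.

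\textbf{Order of the two bounds.} The paper carries out only \emph{one} maximum-principle computation, on the quantity $h^{11}/u$ (i.e.\ on $1/(\lambda_{\min}u)$), which yields the lower bound $\lambda_i\ge C^{-1}$. The upper bound then follows for free from Lemma~\ref{C0estimate2 boundsigma2}: once every $\lambda_j\ge C^{-1}$ and $\sigma_k(\lambda)\le C_2$, the single term $\lambda_1\cdots\lambda_{k-1}\lambda_n$ already forces $\lambda_n\le C_2C^{\,k-1}$. You correctly observe that an upper bound on $\lambda_{\max}$ together with a lower bound on $\sigma_k$ does \emph{not} pin down $\lambda_{\min}$ when $k<n$; but you miss that the reverse implication \emph{does} hold. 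So the efficient strategy is exactly the opposite of yours: prove the lower bound on the radii by a parabolic argument, and get the upper bound algebraically. Your first maximum-principle step is unnecessary.

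\textbf{Use of the hypothesis on $\vphi$.} More seriously, your barrier mechanism is the wrong structure. In the evolution of the extremal eigenvalue the bad second-derivative term is $\nabla_1\nabla_1\Phi$ (derivatives in the \emph{extremal direction}), not $L^{pq}\nabla_p\nabla_q\Phi$. Adding $b\vphi$ to the test function contributes $-b\,\Phi L^{pq}\nabla_p\nabla_q\vphi$ to the linearised operator, which has the wrong index pattern to cancel $f_{11}$; moreover $W_\vphi>0$ only gives $L^{pq}\vphi_{pq}\ge -\vphi\sum_pL^{pp}$, a lower bound involving the possibly large $\sum L^{pp}$, so ``choosing $b$ large'' does not absorb the $f_{11}$ term as you claim. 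The paper instead uses the hypothesis \emph{pointwise and algebraically}: after Young's inequality kills the cross term $2\nabla_1\Phi\,\nabla_1F$, the coefficient of $(h^{11}/u)^2$ becomes
\[
\frac{k\beta}{1+k\beta}\Big(\frac{f_1}{f}\Big)^2-\frac{f_{11}}{f}
-\frac{2\alpha}{1+k\beta}\,\frac{u_1}{u}\,\frac{f_1}{f}
+\frac{\alpha(1+k\beta-\alpha)}{1+k\beta}\Big(\frac{u_1}{u}\Big)^2-(1+k\beta-\alpha),
\]
and the assumption $\vphi_{11}+\vphi>0$ is exactly the inequality
$-\tfrac{f_{11}}{f}+\tfrac{k\beta-\alpha}{1+k\beta-\alpha}(\tfrac{f_1}{f})^2<1+k\beta-\alpha$;
after completing the square in $u_1/u$ (using $\alpha<0$) the whole coefficient is strictly negative. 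This gives $\partial_t(h^{11}/u)\le -C_1(h^{11}/u)^2+C_2(h^{11}/u)$ directly, with no auxiliary barrier at all.
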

\begin{proof}
Suppose the maximum eigenvalue of the matrix $[\frac{h^{ij}}{u}]$  at time $t$ is attained at the point $x_t$ with unit eigenvector $\xi_t\in T_{x_t}S^n$. By a rotating the frame $e_1, \cdots, e_n$ at $x_t$, assume that at $x_t$ we have $\xi_t=e_1$. At $(x_t,t)$, we have
\begin{eqnarray*}
{\partial_t{h^{11}}}&=&{\Phi}L_{kl}^{'}{\nabla}_k{\nabla}_lh^{11}-{\Phi}({h^{11}})^2(L+L_{kl}^{'}h_{kl})+{\Phi}(trL^{'})h^{11}+\eta h^{11}\\
& &-{\Phi}({h^{11}})^2\big(2L_{km}^{'}h^{nl}+L_{kl,mn}^{''}\big){\nabla}_1h_{kl}{\nabla}_1h_{mn}
-({h^{11}})^2(2{\nabla}_1{\Phi}{\nabla}_1L+L{\nabla}_1{\nabla}_1{\Phi}).
\end{eqnarray*}
Since $F=\sigma_k^{\frac{1}{k}}$ is concave and homogeneous of degree one, from \cite{Urb91}
\begin{equation}
F_{ij}h_{ij}=F,
\end{equation}
\begin{equation}
(2F_{km}h^{nl}+F_{kl,mn}){\nabla}_1h_{kl}{\nabla}_1h_{mn}\ge2F^{-1}F_{kl}F_{mn}{\nabla}_1h_{kl}{\nabla}_1h_{mn},
\end{equation}
we have
\begin{eqnarray*}
{\partial_t{h^{11}}}&=&{\Phi}k{\beta}F^{k{\beta}-1}F_{kl}{\nabla}_k{\nabla}_lh^{11}-{\Phi}({h^{11}})^2(F^{k \beta}+k \beta F^{k \beta -1}F_{kl}h_{kl})+{\Phi}k \beta F^{k{\beta}-1}\sum_{i}^{n}F_{ii}h^{11}
\\&{}&-{\Phi}({h^{11}})^2\Big(2k\beta F^{k\beta-1}F_{km}h^{nl}+k\beta (k\beta-1)F^{k\beta-2}F_{mn}F_{kl}
\\&{}&+k\beta F^{k\beta-1}F_{kl,mn}\Big){\nabla}_1h_{kl}{\nabla}_1h_{mn}
-({h^{11}})^2(2{\nabla}_1{\Phi}k\beta F^{k\beta -1}{\nabla}_1F+F^{k\beta}{\nabla}_1{\nabla}_1{\Phi})+\eta h^{11}
\\&\le&{\Phi}k{\beta}F^{k{\beta}-1}F_{kl}{\nabla}_k{\nabla}_lh^{11}-{\Phi}(h^{11})^2(1+k\beta) F^{k\beta}+\Phi k\beta F^{k\beta-1}\sum_{i}^{n}F_{ii}h^{11}+\eta h^{11}
\\&{}&-{\Phi}(h^{11})^2 k{\beta}(k{\beta}+1)F^{k{\beta}-2}({\nabla}_{1}F)^{2}-(h^{11})^2(2k{\beta}F^{k{\beta}-1}{\nabla}_1F{\nabla}_1{\Phi}+F^{k{\beta}}{\nabla}_1{\nabla}_1{\Phi}).
\end{eqnarray*}
Since
\[
2k\beta F^{k\beta-1}\nabla_1F\nabla_1\Phi\le\Phi k\beta(k\beta+1)F^{k\beta}(\frac{\nabla_1F}{F})^2+\frac{k\beta}{k\beta+1}F^{k\beta}{\frac{({\nabla}_1{\Phi})^2}{{\Phi}}},
\]
we have
\begin{eqnarray*}
\partial_th^{11}&\le&{\Phi}k{\beta}F^{k{\beta}-1}F_{kl}{\nabla}_k{\nabla}_lh^{11}-{\Phi}F^{k\beta}(1+k\beta)(h^{11})^2+\Phi k\beta F^{k\beta-1}\sum_{i}^{n}F_{ii}h^{11}+\eta h^{11}
\\&{}&+\frac{k\beta}{k\beta+1}F^{k\beta}{\frac{({\nabla}_1{\Phi})^2}{{\Phi}}}(h^{11})^2-F^{k\beta}{\nabla}_1{\nabla}_1{\Phi}(h^{11})^2.
\end{eqnarray*}
Since $\nabla_i\frac{h^{11}}{u}=0$ and $\nabla_{ij}\frac{h^{11}}{u}\le0$,
\begin{eqnarray*}
\partial_t\frac{h^{11}}{u}&\le&-\Phi(1+k\beta)uF^{k\beta}(\frac{h^{11}}{u})^2+\frac{k\beta}{k\beta+1}uF^{k\beta}{\frac{({\nabla}_1{\Phi})^2}{{\Phi}}}(\frac{h^{11}}{u})^2
\\&{}&-uF^{k\beta}{\nabla}_1{\nabla}_1{\Phi}(\frac{h^{11}}{u})^2
+(k\beta-1)u^{-1}F^{k\beta}\Phi\frac{h^{11}}{u}+2\eta\frac{h^{11}}{u}
\\&\le&u\Phi F^{k\beta}(\frac{h^{11}}{u})^2\Big(\frac{k\beta}{1+k\beta}(\frac{f_1}{f})^2- \frac{f_{11}}{f}-\frac{2\alpha}{1+k\beta}\frac{u_1}{u}\frac{f_1}{f}+\frac{\alpha(1+k\beta-\alpha)}{1+k\beta} (\frac{u_1}{u})^2\Big)
\\&{}&-\Phi(1+k\beta-\alpha)uF^{k\beta}(\frac{h^{11}}{u})^2+(k\beta-1-\alpha)u^{-1}F^{k\beta}\Phi\frac{h^{11}}{u}+2\eta\frac{h^{11}}{u}
\end{eqnarray*}
Since $(f^{\frac{1}{1+k\beta-\alpha}})_{11}+f^{\frac{1}{1+k\beta-\alpha}}>0$, substituting it into the above inequality, we have
\begin{eqnarray*}
\partial_t\frac{h^{11}}{u}&\le&u\Phi F^{k\beta}(\frac{h^{11}}{u})^2\Big(-\frac{f_{11}}{f}+\frac{k\beta-\alpha}{1+k\beta-\alpha}(\frac{f_1}{f})^2-(1+k\beta-\alpha)
\\&{}&+\alpha\big(\sqrt{\frac{1+k\beta-\alpha}{1+k\beta}}\frac{u_1}{u}-\sqrt{\frac{1}{(1+k\beta)(1+k\beta-\alpha)}}\frac{f_1}{f}\big)^2 \Big)
\\&{}&+(k\beta-1-\alpha)u^{-1}F^{k\beta}\Phi\frac{h^{11}}{u}+2\eta\frac{h^{11}}{u}
\end{eqnarray*}
For $\beta>\frac{1}{k}$ and  $\alpha\le1-k\beta<0$, we get $k\beta-1-\alpha\ge2(k\beta-1)>0$. Hence
\[\partial_t\frac{h^{11}}{u}\le-C_1(\frac{h^{11}}{u})^2+C_2\frac{h^{11}}{u},\]
that is, $h^{11}\le C$, where $C$ depends on the initial hypersurface,  the minimum eigenvalue of $(\nabla_i\nabla_jf^{\frac{1}{1+k\beta-\alpha}}+\delta_{ij}f^{\frac{1}{1+k\beta-\alpha}})$, $f$, $\alpha$, and $\beta$.
Now together with  Lemma \ref{C0estimate2 boundsigma2},
 we get
\begin{equation}
       C^{-1}\le\lambda_i\le C.
\end{equation}
We therefore complete the proof.
\end{proof}

\begin{lem}\label{boundsprincipleradii1}
Let $1\le k\le n$, $\alpha, \beta\in R^1$, $\beta>\frac{1}{k}$ and $\alpha\le 1-k\beta$. Let $X(., t)$ be the solution to the normalised flow \eqref{NEF1} for $t\in[0,T)$, which encloses the origin. Then there is a constant $C$ depending only on the initial hypersurface and $\alpha, \beta$, such that the principal curvature radii of $X(\cdot, t)$ are bounded from above and below
$$\frac{1}{C}\le\lambda_i(.,t)\le C$$ for all $t\in[0,T)$ and $i=1,...,n$.
\end{lem}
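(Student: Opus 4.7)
The plan is to follow the argument of Lemma \ref{boundsprincipleradii2} verbatim, specialized to the case $f \equiv 1$. The restriction $k < n$ in that lemma arises solely from the need to impose the auxiliary hypothesis on $\nabla_i\nabla_j f^{1/(1+k\beta - \alpha)} + \delta_{ij} f^{1/(1+k\beta - \alpha)}$; but for $f \equiv 1$ this hypothesis reduces to the statement that $\delta_{ij}$ is positive definite, which is trivial. Consequently, the same calculation is available with no restriction on $k$, and in particular it now covers the previously excluded endpoint $k = n$. The needed $C^0$ and $C^1$ bounds on $u$, as well as the bound on $\sigma_k$, are supplied by Lemma \ref{C0estimate1}, Corollary \ref{boundsigmak1} and Corollary \ref{C1estimate1}.

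Concretely, I would consider the auxiliary function $h^{11}/u$, where $h^{11}$ denotes the largest eigenvalue of the inverse of $h_{ij}=\nabla_i\nabla_j u + u\delta_{ij}$, and apply the maximum principle at a spatial maximum. Choosing a frame that diagonalizes $h_{ij}$ with $e_1$ aligned with the relevant eigendirection, and substituting $f\equiv 1$ into the computation of Lemma \ref{boundsprincipleradii2}, every term involving $f_i/f$ or $f_{ij}/f$ drops out, leaving an inequality of the schematic form
\[
\partial_t \frac{h^{11}}{u} \le -u\,\Phi\, F^{k\beta}(1+k\beta-\alpha)\left(\frac{h^{11}}{u}\right)^2 + (k\beta-1-\alpha)\, u^{-1} F^{k\beta} \Phi \,\frac{h^{11}}{u} + 2\gamma\frac{h^{11}}{u},
\]
where $\Phi = u^\alpha$ and $\eta = \gamma$ on the flow \eqref{NEF1}. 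Because $\beta > 1/k$ and $\alpha \le 1-k\beta$, one has $1+k\beta-\alpha \ge 2k\beta > 0$, so the leading coefficient is strictly negative; by Lemma \ref{C0estimate1} and Corollary \ref{boundsigmak1} the factors $u$, $\Phi$, $F^{k\beta}=\sigma_k^\beta$ are uniformly pinched between two positive constants. A standard ODE comparison at the maximum then yields $h^{11}/u \le C$, which together with the $C^0$-bound on $u$ gives a uniform lower bound $\lambda_i \ge C^{-1}$.

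To obtain the upper bound on $\lambda_i$, I would invoke the uniform bound $\sigma_k \le C$ from Corollary \ref{boundsigmak1}. With $\lambda_j \ge C^{-1}$ for every $j$, the elementary inequality
\[
\sigma_k(\lambda) \ge \binom{n-1}{k-1} C^{-(k-1)} \max_i \lambda_i
\]
(or, in the extremal case $k=n$, simply $\sigma_n = \prod_i \lambda_i$) immediately forces $\max_i \lambda_i \le C'$.

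The only step requiring care, as in Lemma \ref{boundsprincipleradii2}, is to verify that the concavity-type inequality $(2F_{km}h^{nl} + F_{kl,mn})\nabla_1 h_{kl}\nabla_1 h_{mn} \ge 2F^{-1}F_{kl}F_{mn}\nabla_1 h_{kl}\nabla_1 h_{mn}$ combines correctly with the $(\nabla_1 F)^2$ gradient terms so that the coefficient of $(h^{11})^2$ retains the favorable sign. But this is precisely the computation already carried out in Lemma \ref{boundsprincipleradii2}, and $f\equiv 1$ is its simplest instance, so no new obstacle arises.
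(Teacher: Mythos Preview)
Your proposal is correct and follows the same maximum-principle route as the paper: bound the largest eigenvalue of $(h_{ij})^{-1}$ from above, then combine with the two-sided bound on $\sigma_k$ from Corollary~\ref{boundsigmak1} to pinch all principal radii. The only difference is cosmetic: you use the auxiliary quantity $h^{11}/u$ (transplanting the computation of Lemma~\ref{boundsprincipleradii2} verbatim with $f\equiv 1$, $\eta=\gamma$), whereas the paper applies the maximum principle directly to $h^{11}$.
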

\begin{proof}
We prove the lemma just as Lamma \ref{boundsprincipleradii2}.
Suppose the maximum eigenvalue of the matrix $[h^{ij}]$  at time $t$ is attained at the point $x_t$ with unit eigenvector $\xi_t\in T_{x_t}S^n$. By rotating the frame $e_1, \cdots, e_n$ at $x_t$, assume that at $x_t$ we have $\xi_t=e_1$. At $(x_t,t)$, we have
\begin{eqnarray*}
{\partial_t{h^{11}}}&=&{\Phi}k{\beta}F^{k{\beta}-1}F_{kl}{\nabla}_k{\nabla}_lh^{11}-{\Phi}({h^{11}})^2(F^{k \beta}+k \beta F^{k \beta -1}F_{kl}h_{kl})+{\Phi}k \beta F^{k{\beta}-1}\sum_{i}^{n}F_{ii}h^{11}
\\&{}&-{\Phi}({h^{11}})^2\Big(2k\beta F^{k\beta-1}F_{km}h^{nl}+k\beta (k\beta-1)F^{k\beta-2}F_{mn}F_{kl}
\\&{}&+k\beta F^{k\beta-1}F_{kl,mn}\Big){\nabla}_1h_{kl}{\nabla}_1h_{mn}
\\&{}&-({h^{11}})^2(2{\nabla}_1{\Phi}k\beta F^{k\beta -1}{\nabla}_1F+F^{k\beta}{\nabla}_1{\nabla}_1{\Phi})+\gamma h^{11}
\\&\le&-{\Phi}(h^{11})^2 F^{k\beta}+\gamma h^{11}-{\Phi}(h^{11})^2 k{\beta}(k{\beta}+1)F^{k{\beta}-2}({\nabla}_{1}F)^{2}
\\&{}&-(h^{11})^2(2k{\beta}F^{k{\beta}-1}{\nabla}_1F{\nabla}_1{\Phi}+F^{k{\beta}}{\nabla}_1{\nabla}_1{\Phi}).
\end{eqnarray*}
Since
\[
2k\beta F^{k\beta-1}\nabla_1F\nabla_1\Phi\le\Phi k\beta(k\beta+1)F^{k\beta}(\frac{\nabla_1F}{F})^2+\frac{k\beta}{k\beta+1}F^{k\beta}{\frac{({\nabla}_1{\Phi})^2}{{\Phi}}},
\]
we have
\begin{eqnarray}
\partial_th^{11}&\le&(\alpha-1)u^{\alpha}\sigma_k^{\beta}(h^{11})^2+(\gamma-\alpha u^{\alpha-1}{\sigma_k}^{\beta})h^{11}
\nonumber\\
&{}&+\frac{\alpha(1+ k\beta-\alpha)}{k\beta+1}u^{\alpha-2}({\nabla}_1u)^2\sigma_k^{\beta}{(h^{11})^2}. \label{h^{11}dert1}
\end{eqnarray}
Since $\beta>\frac{1}{k}$, $\alpha\le 1-k\beta<0$, by \eqref{h^{11}dert1} we have
\[
\partial_th^{11}\le -C_0(h^{11})^2+C_1h^{11}
\]
where $C_0$ and $C_1$ are two positive constants which depend only on the initial hypersurface and $\alpha, \beta$. Therefore we have
\[
h^{11}\le \max\{C_3, \max h^{11}(0)\}
\]
where  $C_3$ is also a positive constants depending only on the initial hypersurface and $\alpha, \beta$. Now together with Lemma \ref{boundsigmak1},
 we get
\begin{equation}
       C^{-1}\le\lambda_i\le C.
\end{equation}
\end{proof}

Now we show Lemma \ref{boundsprincipleradii2} holds for any positive smooth function $f$ when $k=n$, $\beta>\frac{1}{n}$ and $\alpha\le1-n\beta$.
\begin{lem}\label{boundsprincipleradii3}
Let $n\ge2$, $k=n$. If $\alpha\le1-n\beta$, $\beta>\frac{1}{n}$ and $X(\cdot, t)$ be the solution to the normalised flow \eqref{NEF2} which encloses the origin for $t\in [0,T)$. Then there is a constant $C$ depending only on the initial hypersurface and $f$, $\alpha$, $\beta$, such that the principal curvature radii of $X(\cdot, t)$ are bounded from above and below
\[C^{-1}\le\lambda_i(.,t)\le C\] for all $t\in[0,T)$ and $i=1,...,n$.
\end{lem}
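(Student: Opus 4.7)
The plan is to mimic the maximum-principle argument of Lemma \ref{boundsprincipleradii2} with the test function $W=h^{11}/u$ (where $h^{11}$ denotes the largest eigenvalue of the matrix of principal curvature radii), replacing the convexity hypothesis on $f^{1/(1+k\beta-\alpha)}$ by the special structure of $\sigma_n = \det W_u$ available precisely when $k=n$.

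First I would note that the $C^0$ estimate on $u$ (Lemma \ref{C0estimate2 boundsigma2}), the gradient estimate $|\nabla\log u|\le C$ (Lemma \ref{C1estimate2}), and the bound $C^{-1}\le u^{\alpha-1}\sigma_n^\beta\le C$ (Lemma \ref{bound usigmak2}) all carry over to the present setting unchanged, since their proofs rely only on $\alpha\le 1-n\beta$, $\beta>1/n$, and the smoothness and positivity of $f$, never on any convexity hypothesis. Consequently $u$, $u^{-1}$ and $\sigma_n^{\pm1}$ are uniformly bounded, and in particular $\prod_i \lambda_i = \sigma_n$ is pinched between two positive constants.

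Next, at a spatial maximum of $W$, after rotating the frame to diagonalize $W_u$ with $\lambda_1=h^{11}$, the same parabolic calculation as in Lemma \ref{boundsprincipleradii2} produces a differential inequality whose coefficient of $(h^{11}/u)^2$ is the same bracketed expression $\Lambda$ as there. For $k<n$ the convexity hypothesis forced $\Lambda<0$; in its absence I would exploit the Monge--Amp\`ere identity $F^{ii}=\sigma_n^{1/n}/(n\lambda_i)$, which holds only for $k=n$ and yields the sharp relation $\sum_i F^{ii} = (F/n)\sum_i 1/\lambda_i$. Using the maximum-point condition $\nabla_i h^{11} = (h^{11}/u)\nabla_i u$ together with this identity, I would transform the negative quadratic gradient term $-\Phi(h^{11})^2 n\beta(n\beta+1) F^{n\beta-2}(\nabla_1 F)^2$ already present in the calculation into an additional coercive negative $(h^{11})^2$ contribution. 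This absorbs the unsigned $-f_{11}/f$ and $(f_1/f)^2$ pieces of $\Lambda$ after applying Cauchy--Schwarz, producing a differential inequality of the form $0\le -c_1(h^{11})^2 + c_2 h^{11} + c_3$ at the maximum point, hence $h^{11}\le C$.

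The principal technical obstacle is the bookkeeping of the cross terms involving $f_1$, $u_1$ and $\nabla_1 F$ at the maximum: even with the Monge--Amp\`ere structure, one must apply Cauchy--Schwarz carefully and, if necessary, introduce a Pogorelov-type exponential correction (for example replacing $W$ by $h^{11} u^{-A} e^{B|x|^2/2}$) to ensure that the negative-definite terms dominate for all admissible eigenvalue configurations. Once the upper bound $h^{11}\le C$ is established, the identity $\prod_i\lambda_i = \sigma_n\ge C^{-1}$ together with the upper bound on each $\lambda_j$ immediately yields the lower bound $\lambda_i\ge C^{-1}$ for all $i$, completing the proof.
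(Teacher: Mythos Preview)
Your primary mechanism does not work as stated. At the maximum of $h^{11}/u$ you control $\nabla_i h^{11}$, but $\nabla_1 F = F^{jj}\nabla_1 h_{jj}$ involves the derivatives of \emph{all} eigenvalues, which are not constrained by the critical-point condition. In particular $\nabla_1 F$ can vanish identically (e.g.\ at a point where all $\nabla_1 h_{jj}=0$) while $h^{11}$ is arbitrarily large, so the term $-\Phi(h^{11})^2 n\beta(n\beta+1)F^{n\beta-2}(\nabla_1 F)^2$ cannot be converted into a coercive $-c(h^{11})^2$ contribution. The identity $F^{ii}=F/(n\lambda_i)$ by itself does not help here; what is special about $k=n$ is rather the exact expansion of $\nabla_{11}\sigma_n$, which produces third-order terms $-h^{11}h^{ii}h^{jj}(\nabla_1 h_{ij})^2 + (h^{ij}\nabla_1 h_{ij})^2$ whose signs combine favorably with the Hessian terms arising from $\nabla_{ij}\log h^{11}$.

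What you describe as a fallback---a Pogorelov-type correction---is in fact the essential device, not an optional refinement. The paper works with
\[
w=\log h^{\tau\tau}-\varepsilon\log u+\tfrac{M}{2}\bigl(u^2+|\nabla u|^2\bigr),
\]
and the point of the $-\varepsilon\log u$ term is that, after the Monge--Amp\`ere commutation, it generates $-\beta\varepsilon\sum_i h^{ii}\le -\beta\varepsilon\,h^{11}$. The uncontrolled $f$-terms contribute only $C\,h^{11}$ (using the $C^0$ and gradient bounds on $u$ and the smoothness of $f$), so choosing $\varepsilon$ large yields $0\le C_1 - C_2 h^{11}$. The $\tfrac{M}{2}(u^2+|\nabla u|^2)$ piece is there to absorb the $\eta$-terms. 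If you reorganize your argument around this auxiliary function and the exact $\nabla_{11}\sigma_n$ identity, the proof goes through; the $h^{11}/u$ test function alone will not close without the convexity hypothesis on $f$.
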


\begin{proof}
Consider the auxiliary function
\[w(\xi, t, \tau)=\log h^{\tau\tau} -\varepsilon \log u+\frac{M}{2}({u}^2+|\nabla u|^2),\]
where
$\tau$ is a unit vector in the tangential space of $S^n$, while $\epsilon $ and $M$ are large constants to be decided. Assume $w$ achieve its maximum at $(x_0,t_0)$ in the direction $\tau=(1, 0, \cdots, 0)$. By a coordinate rotation, $h_{ij}$ and $h^{ij}$ are diagonal at this point. Then at the point $(x_0,t_0)$.
\[w=\log h^{11}-\varepsilon  \log u+\frac{M}{2}({u}^2+|\nabla u|^2),\]
\begin{eqnarray}\label{partial1}
0=\nabla_iw=-h^{11}\nabla_ih_{11}-\varepsilon \frac{\nabla_{i}u}{u}+Muu_{i}+M\nabla_{k}u\nabla_{ki}u,
\end{eqnarray}
\begin{eqnarray}\label{partial2}
0\ge\nabla_{ij}w&=&-h^{11}\nabla_{ij}h_{11}+2h^{11}h^{kk}\nabla_{1}h_{ik}\nabla_{1}h_{kj}-(h^{11})^2\nabla_ih_{11}\nabla_jh_{11}
-\varepsilon \frac{\nabla_{ij}u}{u}
\nonumber\\&{}&+\varepsilon \frac{\nabla_{i}u\nabla_{j}u}{{u}^2}+Mu_iu_j+Muu_{ij}+M\nabla_{ki}u\nabla_{kj}u+M\nabla_ku\nabla_{kij}u
\end{eqnarray}
Set $\Phi=fu^{\alpha}$, we have
\begin{eqnarray}
0\le\partial_tw&=&-h^{11}\partial_th_{11}-\varepsilon \frac{\partial_tu}{u}+Mu\nabla_{t}u+M\nabla_{k}u\nabla_{kt}u
\nonumber\\
&=&-h^{11}\Big(\Phi_{11}{\sigma_n}^\beta+2\beta{\sigma_n}^{\beta-1}\nabla_1\Phi\nabla_1\sigma_n+\beta(\beta-1)\Phi{\sigma_n}^{\beta-2}(\nabla_1\sigma_n)^2
\nonumber\\
&{}&+\beta\Phi{\sigma_n}^{\beta-1}\nabla_{11}\sigma_n
-\eta h_{11}+\Phi{\sigma_n}^{\beta}\Big)-\varepsilon \frac{\Phi {\sigma_n}^{\beta}-\eta u}{u}
\nonumber\\
&{}&+Mu(\Phi {\sigma_n}^{\beta}-\eta u)+Mu_k(\Phi {\sigma_n}^{\beta}-\eta u)_k
\nonumber\\
&=&-h^{11}\Big(\Phi_{11}{\sigma_n}^\beta+2\beta{\sigma_n}^{\beta}\nabla_1\Phi h^{ij}\nabla_1h_{ij}+\beta(\beta-1)\Phi{\sigma_n}^{\beta}(h^{ij}\nabla_1h_{ij})^2
\nonumber\\
&{}&+\beta\Phi{\sigma_n}^{\beta}\big(h^{ij}\nabla_{ij}h_{11}+n-h_{11}\sum_ih^{ii}-h^{ii}h^{jj}(\nabla_1h_{ij})^2+(h^{ij}\nabla_1h_{ij})^2\big)
\nonumber\\
&{}&-\eta h_{11}+\Phi{\sigma_n}^{\beta}\Big)
-\varepsilon \frac{\Phi {\sigma_n}^{\beta}-\eta u}{u}+Mu(\Phi {\sigma_n}^{\beta}-\eta u)+Mu_k(\Phi {\sigma_n}^{\beta}-\eta u)_k
\nonumber
\end{eqnarray}
By \eqref{partial2} and multiplying $\Phi^{-1}{\sigma_n}^{-\beta}$ the two sides of the above inequality,we obtain
\begin{eqnarray*}
0&\le&-h^{11}\frac{\nabla_{11}\Phi}{\Phi}+h^{11}\big((\frac{\nabla_1\Phi}{\Phi})^2+\beta^2(h^{ij}\nabla_1h_{ij})^2\big) -h^{11}\beta^2(h^{ij}\nabla_1h_{ij})^2
\nonumber\\
&{}&+\beta\Big(-2h^{11}h^{ij}h^{kk}\nabla_1h_{ik}\nabla_1h_{jk}
+(h^{11})^2h^{ij}\nabla_ih_{11}\nabla_jh_{11}
+n\varepsilon  {u}^{-1}
\nonumber\\
&{}&-\varepsilon \sum_ih^{ii}-\varepsilon  h^{ij}u_iu_j{u}^{-2}-Mh^{ij}h_{ik}h_{jk}+nMu
-Mh^{ij}u_k\nabla_kh_{ij}\Big)
\nonumber\\
&{}&-h^{11}(1+n\beta)+\beta\sum_ih^{ii}+\beta h^{11}h^{ii}h^{jj}(\nabla_1h_{ij})^2+Mu+Mu_k\frac{\nabla_k\Phi}{\Phi}
\nonumber\\&{}&+\beta M h^{ij}u_k\nabla_kh_{ij}
-\eta M\frac{{u}^2+|\nabla u|^2}{\Phi{\sigma_n}^\beta}-\varepsilon  {u}^{-1}+\eta\frac{\varepsilon +1}{\Phi{\sigma_n}^\beta}
\\&\le&-2\beta h^{11}h^{ij}h^{kk}\nabla_{1}h_{ik}\nabla_{1}h_{kj}+\beta(h^{11})^2h^{ij}\nabla_{i}h_{11}\nabla_{j}h_{11}+\beta h^{11}h^{ii}h^{jj}(\nabla_{1}h_{ij})^2
\nonumber\\
&{}&+\beta\frac{n\varepsilon }{u}
-\beta\varepsilon \sum h^{ii}
+\beta\sum_ih^{ii}+Mu+h^{11}(\frac{\nabla_1\Phi}{\Phi})^2
\nonumber\\
&{}&+M\beta nu-h^{11}\frac{\nabla_1\nabla_1\Phi}{\Phi}+ Mu_k\frac{\nabla_k\Phi}{\Phi}-\eta\frac{M{r}^2-\varepsilon -1}{\Phi {\sigma_n}^\beta}
\\&\le& C_0-\beta(\varepsilon -1) h^{11}+h^{11}(\frac{\nabla_1\Phi}{\Phi})^2-h^{11}\frac{\nabla_1\nabla_1\Phi}{\Phi} +Mu_k\frac{\nabla_k\Phi}{\Phi}-\eta\frac{M{r}^2-\varepsilon -1}{\Phi {\sigma_n}^\beta},
\end{eqnarray*}
where we use the Cauchy inequality $2\beta\frac{\nabla_1\Phi}{\Phi} h^{ij}\nabla_1h_{ij}\le(\frac{\nabla_1\Phi}{\Phi})^2+\beta^2(h^{ij}\nabla_1h_{ij})^2$ for the second term.\\
Since $\nabla_k\Phi=\nabla_k\big(fu^{\alpha}\big)$, we obtain
\begin{eqnarray*}
& &h^{11}(\frac{\nabla_1\Phi}{\Phi})^2-h^{11}\frac{\nabla_1\nabla_1\Phi}{\Phi} +Mu_k\frac{\nabla_k\Phi}{\Phi}\\
&=&h^{11}\left(\frac{f_1}{f}+\alpha\frac{u_1}{u}\right)^2-h^{11}\Big(\frac{\nabla_1\nabla_1f}{f}+2\alpha\frac{\nabla_1f}{f}\frac{\nabla_1u}{u}+\alpha(\alpha-1)(\frac{\nabla_1u}{u})^2\Big)
\\&{}&+M(\nabla_ku\frac{\nabla_kf}{f}+\alpha u^{-1}|\nabla u|^2) \\&\le&Ch^{11}+CM
\end{eqnarray*}
Choosing $M\ge\frac{\varepsilon+1}{\min {r^*}^2}$, the inequality becomes
\[0\le C_0-\beta(\varepsilon -1) h^{11}+Ch^{11}+CM.\]
By choosing $\varepsilon $ large to get
\[
0\le C_1-C_2h^{11}.
\]
That is, $h^{11}\le C_3$, where $C_3$ is a constant depending only on $f$, $\alpha$, $\beta$, $\min_{S^n\times[0,T)}u$ and $\max_{S^n\times [0,T)}u$. Hence the principal radii $\lambda_i\ge C_2, i=1, \cdots, n$.
From Lemma \ref{C0estimate2 boundsigma2}, we know $\sigma_n=\lambda_1\cdots\lambda_n\le C_1$.
Therefore we get the $C^2$ estimate $C^{-1}\le\lambda_i\le C, i=1,\cdots,n$ for the solutions to the normalised flow \eqref{NEF2}.
\end{proof}

From the estimates obtained in Lemmata \ref{boundsprincipleradii1}, \ref{boundsprincipleradii2} and  \ref{boundsprincipleradii3}, we know that the equations \eqref{NEF1} and \eqref{NEF2} are uniformly parabolic. By the $C^0$ estimates (Lemmata \ref{C0estimate1} and \ref{C0estimate2 boundsigma2}), the gradient estimates (Lemma \ref{C1estimate1} and Corollary \ref{C1estimate2}) and the $C^2$ estimates Lammata \ref{boundsprincipleradii1}, \ref{boundsprincipleradii2} and  \ref{boundsprincipleradii3}, and the Krylov's theory \cite{Knv87}, we get the H\"{o}lder continuity of $\nabla^2u$ and $u_t$. Then we can get higher order derivation estimates by the regularity theory of the uniformly parabolic equations. Therefore we get the long time existence and the uniqueness of the smooth solution to the normalized flows \eqref{NEF1} and \eqref{NEF2}, respectively.

\section{Proof of Theorem \ref{main1}, Theorem \ref{main2} and Theorem \ref{main3} }
In this section we give the proof of Theorems \ref{main1} at first.
In order to prove the convergence of the normalized flow \eqref{NEF1}, we require
the following better gradient estimate.
\begin{lem}\label{GradEstimate3}
Let $u(.,t)$ be a smooth uniformly convex solution to the flow \eqref{NEF1}. If $\alpha\le1-k\beta$, then there exist positive constants $C$ and $C_0$, depending only on the initial hypersurface and $\alpha$, $\beta$, such that
$$\max_{S^n}\frac{|\nabla u(\cdot, t)|}{u(\cdot, t)}\le Ce^{-C_0t}$$
for all $t>0$.
\end{lem}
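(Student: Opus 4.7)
The plan is to consider the auxiliary function $\psi := |\nabla u|^2/u^2 = |\nabla \log u|^2$ on $S^n$ and to show that its spatial maximum satisfies a differential inequality $\partial_t \psi_{\max} \le -2C_0\,\psi_{\max}$ for some $C_0>0$ depending only on $\alpha,\beta$ and the uniform bounds produced in Section 3. ODE comparison then yields $\psi_{\max}(t) \le \psi_{\max}(0)\,e^{-2C_0 t}$, and taking square roots gives the stated bound on $|\nabla u|/u$; the constant $C$ in the statement can be taken to be $\sqrt{\psi_{\max}(0)}$, which is finite by Corollary \ref{C1estimate1}.

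To set this up, fix $t>0$ and choose a spatial maximum point $x_t$ of $\psi$. Pick a local orthonormal frame on $S^n$ so that $\nabla u$ points along $e_1$ at $x_t$. The first-order condition $\nabla_i\psi(x_t)=0$ forces $u_{1i}=0$ for all $i$ and $u_{11}=u\psi$; a further rotation of $e_2,\ldots,e_n$ diagonalizes $(u_{ij})_{i,j\ge 2}$. Consequently $h_{ij}=u_{ij}+u\delta_{ij}$ is diagonal at $x_t$, with $\lambda_1=u(1+\psi)$ and $\lambda_i=u(1+w_{ii})$ for $i\ge 2$, where $w=\log u$. A direct computation using $\partial_t u=u^\alpha\sigma_k^\beta-\gamma u$ gives
\[
\partial_t\psi \;=\; 2(\alpha-1)\,u^{\alpha-1}\sigma_k^\beta\,\psi + 2\beta\,u^{\alpha-2}\sigma_k^{\beta-1}\,u_1\nabla_1\sigma_k,
\]
so the task reduces to estimating $u_1\nabla_1\sigma_k$ from above.

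The Ricci identity on $S^n$ together with the Codazzi relation for $h_{ij}$ yield, at $x_t$, $\nabla_1 h_{ij}=u_{1ij}+u_1\delta_{i1}\delta_{j1}$ and hence $\nabla_1\sigma_k=\sigma_k^{ii}u_{1ii}+u_1\sigma_k^{11}$. Expanding the second-order maximum-principle condition $\sigma_k^{ij}\nabla_{ij}\psi\le 0$ in our frame gives
\[
w_1\sum_i\sigma_k^{ii}w_{1ii}\;\le\;-\sum_{i\ge 2}\sigma_k^{ii}w_{ii}^2,
\]
which controls the third-order contribution to $\sigma_k^{ii}u_{1ii}$. A crucial algebraic cancellation then occurs: substituting $w_{ii}=\lambda_i/u-1$ for $i\ge 2$ and invoking the Euler identity $\sum_i\sigma_k^{ii}\lambda_i=k\sigma_k$ (with $\lambda_1=u(1+\psi)$) collapses the mixed terms with no definite sign and leaves
\[
\partial_t\psi_{\max}\;\le\; 2(\alpha-1+k\beta)\,u^{\alpha-1}\sigma_k^\beta\,\psi_{\max} \,-\, 2\beta\,u^\alpha\sigma_k^{\beta-1}\sum_{i\ge 2}\sigma_k^{ii}\bigl(w_{ii}^2+\psi_{\max}\bigr).
\]
Under the hypothesis $\alpha\le 1-k\beta$ the first coefficient is nonpositive, while Lemmas \ref{C0estimate1}, \ref{boundsigmak1}, and \ref{boundsprincipleradii1} imply $2\beta\,u^\alpha\sigma_k^{\beta-1}\sum_{i\ge 2}\sigma_k^{ii}\ge 2C_0>0$ (here $n\ge 2$ ensures the sum is non-empty, and $\sigma_k^{ii}$ is bounded below by a positive constant since each $\lambda_j$ is). Dropping the nonnegative $w_{ii}^2$ contribution then gives $\partial_t\psi_{\max}\le -2C_0\,\psi_{\max}$, as desired.

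The main obstacle is the algebraic collapse described above. When one estimates $u_1\sigma_k^{ii}u_{1ii}$ directly from the max-principle condition and the expansion of $u_{1ii}$ in terms of $w_{1ii}$ and $w_{ii}$, one is left with three contributions ($u^2\psi^2\sigma_k^{11}$, $u^2\psi\sum_{i\ge 2}\sigma_k^{ii}w_{ii}$, and the boundary term $u_1^2\sigma_k^{11}$) whose signs are not individually useful; they telescope to the clean expression above only after one simultaneously applies the Euler identity and the substitution $w_{ii}=\lambda_i/u-1$. Without this exact cancellation, a naive Cauchy--Schwarz treatment of the mixed term yields only $\partial_t\psi_{\max}\le -c\,\psi_{\max}^2$, producing polynomial rather than exponential decay, so this precise cancellation (not merely sign-chasing) is essential.
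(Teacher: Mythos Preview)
Your proposal is correct and follows the same strategy as the paper: both use the auxiliary function $Q=\tfrac12|\nabla\log u|^2$ (your $\psi=2Q$), apply the maximum principle together with the Ricci identity on $S^n$, and arrive at a differential inequality of the form $\partial_t Q_{\max}\le -C_0 Q_{\max}$. The only difference is organisational: the paper writes the flow for $w=\log u$ as $w_t=(e^w)^{\alpha+k\beta-1}\sigma_k^\beta(a_{ij})-\gamma$ with $a_{ij}=w_{ij}+w_iw_j+\delta_{ij}$, so the coefficient $(\alpha+k\beta-1)$ appears immediately from the chain rule and the curvature term produced by Ricci gives $\sigma_k^{ij}(w_iw_j-\delta_{ij}|\nabla w|^2)\le(\max_i\sigma_k^{ii}-\sum_i\sigma_k^{ii})|\nabla w|^2$ without further algebra. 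You instead work with $u$ and $\sigma_k(h_{ij})$, which forces you to recover the same coefficient via the Euler identity $\sum_i\sigma_k^{ii}\lambda_i=k\sigma_k$ and the substitution $w_{ii}=\lambda_i/u-1$; your choice of frame with $e_1\parallel\nabla u$ then replaces the paper's $\max_i\sigma_k^{ii}$ by the exact $\sigma_k^{11}$, yielding $-\sum_{i\ge2}\sigma_k^{ii}$ in place of $\max_i\sigma_k^{ii}-\sum_i\sigma_k^{ii}$. The two final inequalities are equivalent, and the a~priori bounds from Section~3 close the argument in the same way.
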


\begin{proof}
Let $w=\log u$. Then we have
\[
h_{ij}=u_{ij}+u\delta_{ij}=e^w(w_{ij}+w_iw_j+\delta_{ij})
\]
and
\[
w_t=(e^w)^{\alpha+k\beta-1}\sigma_k^{\beta}([w_{ij}+w_iw_j+\delta_{ij}])-\gamma.
\]
Consider the auxiliary function
 $$Q=\frac{1}{2}|{\nabla}w|^2.$$
 At the point where $Q$ attains its spatial maximum, we have
\[
0={\nabla}_iQ=\sum w_lw_{li},
\]
\[
0\ge{\nabla}_{ij}Q=\sum w_{li}w_{lj}+\sum w_l w_{lij},
\]
and
\begin{eqnarray*}
{\partial}_tQ&=&\sum w_lw_{lt}
\\&=&2Q(\alpha+k\beta-1)(e^w)^{\alpha+k\beta-1}\sigma_k^{\beta}
\\&{}&+\beta(e^w)^{\alpha+k\beta-1}\sigma_k^{\beta-1}\sigma_k^{ij}(w_lw_{ijl}+w_lw_{il}w_j+w_lw_iw_{jl})
\\&=&2Q(\alpha+k\beta-1)(e^w)^{\alpha+k\beta-1}\sigma_k^{\beta}+\beta(e^w)^{\alpha+k\beta-1}\sigma_k^{\beta-1}\sigma_k^{ij}w_lw_{ijl}.
\end{eqnarray*}
By the Ricci identity,
$${\nabla}_lw_{ij}={\nabla}_jw_{li}+\delta_{il}w_j-\delta_{ij}w_l,$$
we get
\begin{eqnarray*}
{\partial}_tQ&=&2(\alpha+k\beta-1)(e^w)^{\alpha+k\beta-1}\sigma_k^{\beta}Q\\
& &+\beta(e^w)^{\alpha+k\beta-1}\sigma_k^{\beta-1}\sigma_k^{ij}(\sum _lw_lw_{lij}+w_iw_j-\delta_{ij}|{\nabla}w|^2)
 \\&\le&2(\alpha+k\beta-1)(e^w)^{\alpha+k\beta-1}\sigma_k^{\beta}Q+2\beta(e^w)^{\alpha+k\beta-1}\sigma_k^{\beta-1}(\max_i\sigma_k^{ii}-\sum_i\sigma_k^{ii})Q
\\&\le&-C_0Q.
\end{eqnarray*}
We therefore have $Q\le Ce^{-C_0t}$, where $C_0$ and $C$ are two positive constants which depends only on $\alpha$, $\beta$ and the geometry of $M_0$.
\end{proof}

\begin{proof}[Proof of Theorem \ref{main1}]
Case (i). $\alpha<1-k\beta$.
Let $u(.,t)$ be the solution. By rescaling $M_0$ if necessary, we assume
$$a:=\min_{s^n}u(.,0)\le1\le\max_{s^n}u(.,0)=:b.$$
We also
introduce two time-dependent functions
\begin{eqnarray*}
u_1&=&[1-(1-a^{-q})e^{q\gamma t}]^{-\frac{1}{q}},\\
u_2&=&[1-(1-b^{-q})e^{q\gamma t}]^{-\frac{1}{q}},
\end{eqnarray*}
where $q=\alpha+k\beta-1<0$. Both functions $u_1$ and $u_2$ are the solutions of \eqref{NEF1}, hence
by the comparison principle, $u_1(t)\le u(.,t)\le u_2(t)$. That is
$$(a^{-q}-1)e^{q\gamma t}\le u^{-q}-1\le(b^{-q}-1)e^{q\gamma t}.$$
Thus $u$ converges to $1$ exponentially.
By the interpolation and the a priori estimates Lemmas \ref{C0estimate1} and \ref{boundsprincipleradii1}, we see that $\|u(.,t)-1\|_{C^k(S^n)}\to0$ exponentially. Hence $M_t$ converges to the unit sphere centered at the origin.

Case (ii). $\alpha=1-k\beta$.
 By Lemma \ref{GradEstimate3}, we have that $\|\nabla u\|\to0$ exponentially as $t\to\infty$. Hence by the interpolation and the a priori estimates, we can get that $u$ converges exponentially to a constant in the $C^\infty$ topology as $t\to \infty$.
\end{proof}

\begin{proof}
[Proofs of Theorems \ref{main2} and \ref{main3} ]
Recall  Lemma \ref{monotone}. To complete the proof of Theorems \ref{main2} and \ref{main3}, it suffices to show that the solution of \eqref{elliptic eq} is unique.\\
Case1: $\alpha<1-k\beta$.
Let $u_1$,
$u_2$ be two smooth solutions of \eqref{elliptic eq}, i.e.
\[fu_1^{\alpha-1}\sigma_k^{\beta}(\nabla^2u_1+u_1I)=c , fu_2^{\alpha-1}\sigma_k^{\beta}(\nabla^2u_2+u_2I)=c.\]
Suppose $M=\frac{u_1}{u_2}$ attains its maximum at $X_0\in S^n$, then at $x_0$,
\[0=\nabla{\log M}=\frac{\nabla u_1}{u_1}-\frac{\nabla u_2}{u_2},\]
\[0\ge\nabla^2{\log M}=\frac{\nabla^2u_1}{u_1}-\frac{\nabla^2u_2}{u_2}.\]
Hence at $x_0$, we get
\[1=\frac{u_1^{\alpha-1}\sigma_k^{\beta}(\nabla^2u_1+u_1I)}{u_2^{\alpha-1}\sigma_k^{\beta}(\nabla^2u_2+u_2I)}=\frac{u_1^{\alpha-1+k\beta}\sigma_k^{\beta}(\frac{\nabla^2u_1}{u_1}+I)}{u_2^{\alpha-1+k\beta}\sigma_k^{\beta}(\frac{\nabla^2u_2}{u_2}+I)}\le M^{\alpha-1+k\beta}.\]
Since $\alpha<1-k\beta$, $M(x_0)=\max_{S^n} M\le1$. Similarly one can show $\min_{S^n} M\ge1$. Therefore $u_1\equiv u_2$.\\
Case 2: $\alpha=1-k\beta$. We use the same method in \cite{HMS04} to get the solutions for equation\eqref{elliptic eq} differ only by dilation. We omit the proof process here.

Hence we complete the proofs of the Theorems \ref{main2} and \ref{main3}.
\end{proof}

\section{Proof of Theorem \ref{main4} }
In this section we give the proof of Theorem \ref{main4}. The methods for the cases that $f$ being any smooth positive function and $f\equiv1$ are the same , we just consider the case $f\equiv1$. The calculation for the example of the flow \eqref{EF1} is similar to one in \cite{LSW16, LSW17}, we give the brief proof.

By a simple calculation, we obtain the following expressions of the metric, the unit normal, the second fundamental forms and the support function of the hypersurface, according to the radial function $r$
\begin{eqnarray}\label{gij}
g_{ij}=r^2\delta_{ij}+r_ir_j,
\end{eqnarray}
\begin{eqnarray}
n=\frac{rz-{\nabla}r}{\sqrt{r^2+|{\nabla}r|^2}},
\end{eqnarray}
\begin{eqnarray}\label{hbarij}
{\hbar}_{ij}=\frac{-rr_{ij}+2r_ir_j+r^2\delta_{ij}}{\sqrt{r^2+|{\nabla}r|^2}},
\end{eqnarray}
\begin{eqnarray}
u=\frac{r^2}{\sqrt{r^2+|{\nabla}r|^2}}.
\end{eqnarray}

For the convex body $K$ and the hypersurface $M=\partial K$, consider the dual body $K^*$, by \cite{HLYZ16} we know $r^*=\frac{1}{u}$, and
\begin{equation}\label{relationeqs}
K=\frac{\det {\hbar}_{ij}}{\det g_{ij}},
\qquad \frac{1}{\sigma_n^*}=\frac{\det e_{ij}}{\det {\hbar}^*_{ij}},
\qquad\frac{\det e_{ij}}{\det g_{ij}}=\frac{1}{r^{2n-2}(r^2+|{\nabla}r|^2)}.
\end{equation}
Since $r^*=\frac{1}{u}$,
\[\frac{\partial{r^*}}{\partial{t}}=-{r^*}^{2-\alpha}\sigma_k^\beta([{r^*}^{-1}\delta_{ij}+2{r^*}^{-3}r^*_ir^*_j-{r^*}^{-2}r^*_{ij}]).\]
Generally, we consider
\begin{equation}\label{r*t}
\left\{\begin{array}{ll}
\frac{\partial r}{\partial{t}}(x,t)& =-{r}^{2-\alpha}\sigma_k^\beta([{r}^{-1}\delta_{ij}+2{r}^{-3}r_ir_j-{r}^{-2}r_{ij}])(x,t)\\
r(\cdot, 0)&=r_0
\end{array}\right.
\end{equation}
If $\alpha+k\beta-1>0$, that is  $2-\alpha< k\beta+1$. \eqref{ratio} is equivalent to
\begin{equation}\label{ratio*}
R(X(.,t))=\frac{\max_{S^n}r(.,t)}{\min_{S^n}r(.,t)}\to\infty,
 \end{equation}as $t \to T$
for some $T>0$. It is suffice to show that $\min_{S^n}r\to0$ in finite time while $\max_{S^n}r$ remains positive.
By the comparison principle, it suffices to construct a sub-solution $Y(.,t)$
 \begin{equation}
\left\{\begin{array}{ll}
\frac{\partial w}{\partial t}(x,t)&\ge-w^{2-\alpha}\sigma_k^{\beta}([{w}^{-1}\delta_{ij}+2{w}^{-3}w_iw_j-{w}^{-2}w_{ij}])(x,t)\\
w(.,0)&=w_0
\end{array}\right.
\end{equation}such that $\min_{S^n}w(.,t)\to 0$ but $\max_{S^n}w(.,t)$ remains positive, as $t\to T$ for some finite time $T>0$.
\begin{lem}
There is a sub-solution $Y(.,t)$, where $t\in (-1,0)$, to
\begin{equation}\label{a*}
\left\{\begin{array}{ll}
\frac{\partial r}{\partial t}&=-ar^{2-\alpha}\sigma_k^{\beta}([{r}^{-1}\delta_{ij}+2{r}^{-3}r_ir_j-{r}^{-2}r_{ij}])\\
r(.,0)&=r_0
\end{array}\right.
\end{equation}
for a sufficiently large constant $a>0$, such that $\min_{s^n}r(.,t)\to 0$ but $\max_{s^n}r(.,t)$ remains positive, as $t\to0$.
\end{lem}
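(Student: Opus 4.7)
The plan is to construct $Y$ explicitly as the radial function of a family of ellipsoids of revolution whose short axis shrinks to zero while the perpendicular axes stay fixed. This is the natural singular-profile ansatz: it realizes $\min_{S^n} Y \to 0$ together with $\max_{S^n} Y \equiv \mathrm{const}$, while each of the associated convex bodies remains smooth and uniformly convex for $t \in (-1,0)$. Concretely, fix a unit vector $e \in \mathbb{R}^{n+1}$ and a constant $R > 0$, and set
\begin{equation*}
Y(x,t) = \bigl(\lambda(t)^{-2}\langle x, e\rangle^2 + R^{-2}(1 - \langle x, e\rangle^2)\bigr)^{-1/2},
\end{equation*}
where $\lambda(t) > 0$ is smooth on $(-1,0)$ with $\lambda(t) \to 0$ as $t \to 0^-$. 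The body bounded by $\{Y(x,t)\,x : x \in S^n\}$ is the ellipsoid with semi-axis $\lambda(t)$ in the $e$-direction and $R$ in every perpendicular direction, so $Y(\pm e, t) = \lambda(t) \to 0$ while $Y(x,t) = R$ for all $x \perp e$. This already gives the desired degeneracy of the min while preserving positivity of the max.

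The main step is to verify the sub-solution inequality
\begin{equation*}
\frac{\partial Y}{\partial t} \;\geq\; -a\,Y^{2-\alpha}\,\sigma_k^\beta\bigl([Y^{-1}\delta_{ij} + 2Y^{-3}Y_iY_j - Y^{-2}Y_{ij}]\bigr)
\end{equation*}
pointwise on $S^n \times (-1,0)$ for $a$ large enough. Because $Y$ is rotationally symmetric about the $e$-axis, every quantity depends only on the single angular variable $\theta \in [0,\pi]$ with $\langle x,e\rangle = \cos\theta$. A direct calculation writes $\partial_t Y$, $\nabla Y$ and $\nabla^2 Y$ in terms of $\lambda$, $\lambda'$ and $\theta$; the matrix inside $\sigma_k^\beta$ is the Weingarten-type matrix of the ellipsoid, whose principal curvatures and hence $\sigma_k$ are classical and explicit in $\lambda, R, \theta$. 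Under the standing assumption $\alpha + k\beta - 1 > 0$, the exponent satisfies $2 - \alpha - k\beta < 1$, so near the pole (where $Y \approx \lambda$) the right-hand side $a\,Y^{2-\alpha}\sigma_k^\beta$ can be driven to dominate $|\partial_t Y|$ by choosing $\lambda(t)$ to decay like $c(-t)^{1/(\alpha+k\beta-1)}$. Substituting this rate reduces the inequality to a pointwise algebraic inequality in $\theta$ with parameters $a, c$, which can be satisfied by first fixing $c$ and then taking $a$ sufficiently large.

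The main obstacle is verifying the pointwise inequality uniformly in $\theta \in [0, \pi]$: near $\theta = 0$ the geometry is degenerate and $\sigma_k$ becomes singular in a specific way, while at $\theta = \pi/2$ the behavior is essentially that of the sphere of radius $R$, and one must control the transition region between the two regimes. Since the analogous computation is carried out in \cite{LSW16, LSW17} for the case $\beta = 1$, the argument here is a routine adaptation in which the additional $\beta$-dependence is tracked through each step, with the sign hypothesis $\alpha + k\beta - 1 > 0$ providing exactly the balance needed. Once the sub-solution is in hand, the standard comparison principle for \eqref{a*} implies $\min_{S^n} r(\cdot, t) \to 0$ at some finite time while $\max_{S^n} r(\cdot, t)$ stays bounded below by a positive constant, which yields \eqref{ratio} and completes the proof of Theorem \ref{main4}.
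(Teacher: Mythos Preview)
Your ellipsoid ansatz does not work in the full range $\alpha>1-k\beta$ because you have misread the matrix in \eqref{a*}. Since \eqref{r*t} was obtained by writing $r=1/u$ in the support-function flow $\partial_t u=u^\alpha\sigma_k^\beta(h_{ij})$, the argument of $\sigma_k$ is exactly $h_{ij}=u_{ij}+u\delta_{ij}$, i.e.\ the principal \emph{curvature radii} of the body $K$ with support function $u=1/Y$, not the Weingarten matrix of the ellipsoid $K^*$ with radial function $Y$. At the pole $x=e$ one has $Y=\lambda$, $\nabla Y=0$, and a direct Taylor expansion of your $Y$ gives $Y_{ij}=\lambda(1-\lambda^2/R^2)\delta_{ij}$ there, so the matrix equals $(\lambda/R^2)\delta_{ij}$: its eigenvalues tend to $0$, not to $\infty$. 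Consequently the right side at the pole scales like $a\,\lambda^{\,2-\alpha+k\beta}$, and the sub-solution inequality forces $|\lambda'|\le C\lambda^{\,2-\alpha+k\beta}$. This ODE lets $\lambda$ reach $0$ in finite time only when $2-\alpha+k\beta<1$, i.e.\ when $\alpha>1+k\beta$; in the whole range $1-k\beta<\alpha\le 1+k\beta$ the ellipsoid barrier cannot collapse in finite time and your construction fails. (Your stated rate $\lambda\sim(-t)^{1/(\alpha+k\beta-1)}$ and the inequality ``$2-\alpha-k\beta<1$'' come from the wrong sign on the $k\beta$ term, consistent with having taken the eigenvalues to be $\sim\lambda^{-1}$.)

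The paper avoids exactly this obstruction by \emph{not} using an ellipsoid: it writes $\widehat M_t$ near its tip as the graph $x_{n+1}=\psi(\rho,t)$ with a parabolic cap $\psi=-|t|^\theta+|t|^{(\mu-1)\theta}\rho^2$ for $\rho<|t|^\theta$ glued to a power profile $\rho^{1+\mu}$ outside, where $\mu=\frac{q\theta-1}{k\beta\theta}$, $q=\alpha+k\beta-1>0$, $\theta>1/q$. The point is that this tip is \emph{pointy}: its second fundamental form is of order $|t|^{(\mu-1)\theta}\to\infty$ even though the distance to the origin $|t|^\theta\to 0$. This decoupling of ``height'' from ``curvature'' is precisely what makes $r^{2-\alpha}\sigma_k^\beta\gtrsim|t|^{\theta-1}$ on the whole cap, matching $|\partial_t\psi|\le\theta|t|^{\theta-1}$ and yielding the sub-solution inequality for large $a$ on the entire range $\alpha>1-k\beta$. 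The second regime $|t|^\theta\le\rho\le1$ and the smooth closing-up containing a fixed ball $B_1(z)$ are then routine. If you want to salvage a rotationally symmetric barrier, you need a profile whose curvature at the tip blows up as the tip approaches the origin; an ellipsoid does the opposite.
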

\begin{proof}
The sub-solution is a family of closed convex hypersurfaces $
\widehat M_t:=Y(S^n, t)$.
Near the origin, let $\widehat M_t$ be the graph of a radial function
\begin{equation}
\psi(\rho,t)=\left\{\begin{array}{llcl}
&-|t|^{\theta}+|t|^{-\theta+\mu\theta}\rho^2 & {\rm{if}}  &\rho<|t|^\theta\\
&-|t|^{\theta}-\frac{1-\mu}{1+\mu}|t|^{{\theta}(1+\mu)}+\frac{2}{1+\mu}\rho^{1+\mu} & {\rm{if }} &|t|^\theta\le\rho\le1
\end{array}
\right.
\end{equation}
where $q=k\beta+1-\widehat\alpha>0$, $\widehat\alpha=2-\alpha$, $\mu=\frac{q\theta-1}{k\beta\theta}$, and $\theta>\frac{1}{q}$ is a constant. It is easy to verify that $\psi$ is strictly convex, and $\psi\in C^{1,1}(B_1(0))$.

If $Y(S^n, t)$ is a sub-solution for some $\widehat\alpha$, it is also a sub-solution for $\alpha'<\widehat\alpha$, so we prove the case when $q=k\beta+1-\widehat\alpha$ is very small.
From the equalities \eqref{gij}, \eqref{hbarij},\eqref{relationeqs} and Newton-Maclaurin inequality, by direct computation, we have

(i) if $0\le\rho\le|t|^\theta$, then
$$r^{\widehat\alpha}\sigma_k^{\beta}([{r}^{-1}\delta_{ij}+2{r}^{-3}r_ir_j-{r}^{-2}r_{ij}])\ge c|t|^{\theta\widehat\alpha}|t|^{k\beta\theta(\mu-1)}=c|t|^{\theta-1},$$
$$|\frac{\partial}{\partial t}Y(p,t)|\le\theta|t|^{\theta-1},$$
 where $p=(x,\psi(|x|,t))$ is a point on the graph of $\psi$.

(ii) if $|t|^\theta\le\rho\le1$, then
$$r^{\widehat\alpha}\sigma_k^{\beta}([{r}^{-1}\delta_{ij}+2{r}^{-3}r_ir_j-{r}^{-2}r_{ij}])\ge c\rho^{\widehat\alpha}\rho^{k\beta(\mu-1)}=c|t|^{\theta-1},$$
$$|\frac{\partial}{\partial t}Y(p,t)|\le\theta|t|^{\theta-1}.$$

 Extending the graph of $\psi$ to a closed convex hypersurface $\widehat M_t $, such that it is $C^{1,1}$ smooth, uniformly convex, rotationally symmetric. Moreover, assume that the ball $B_1(z)$ is contained in the interior of $\widehat M_t$, for all $t\in(-1,0)$, where $z=(0,...,0,10)$ is a point on the $x_{n+1}$-axis.
\end{proof}

For a given $\tau\in(-1,0)$, let $B_1(z)\subset M_0\subset\widehat M_\tau$ , let $M_t$ be a solution to the flow \eqref{a*} with initial data $M_0$. $M_t$ touches the origin at $t=t_0$, for some $t_0\in(\tau,0)$. We assume  $\tau\to 0$ closely.

On the other hand, let $\widetilde X(.,t)$ be the solution to
$$\frac{\partial X}{\partial t}=-ba\widetilde r^{\widehat\alpha}\sigma_k^{\beta}\nu$$
with initial condition $\widetilde X(.,\tau)=\partial B_1(z)$, where $b=2^{\widehat\alpha} sup\{|p|^{\widehat\alpha}: p\in M_t,\tau<t<t_0\}$ and $\widetilde r=|X-z|$. We can choose $\tau$ so small that the ball $B_\frac{1}{2}(z)$ is contained in the interior of $\widetilde X(.,t)$ for all $t\in(\tau,t_0)$.
By the comparison principle, we know that the ball $B_\frac{1}{2}(z)$ is contained in the interior of $M_t$ for all $t\in(\tau,t_0)$. Hence, as $t\to t_0$, we have $\min r(.t)\to0$ and $\max r(.,t)>|z|=10$. Hence \eqref{ratio*} is proved for $M_t$.

For a large constant $a>0$. Making the rescaling $\widetilde M_t=a^{-\frac{1}{q}}M_t$, $\widetilde M_t$ solve the flow \eqref{r*t}. Hence we complete the proof.

\vskip10pt


\end{document}